\newcommand{\1}{$_\mathrm{a}$}
\newcommand{\2}{$_\mathrm{b}$}
\newcommand{\3}{$_\mathrm{c}$}
\newcommand{\tp}{^{\!\top}\!}
\newcommand{\acs}{almost-complex structure}
\newcommand{\aK}{almost-K\"ahler\ }
\newcommand{\pd}{\partial}
\newcommand{\Nil}{\mathit{Nil}}
\newcommand{\Sol}{\mathit{Sol}}
\newcommand{\ext}[1]{\raise1pt\hbox{$\bigwedge^{\!#1}$}\,}
\renewcommand{\ge}{\geqslant}
\renewcommand{\le}{\leqslant}
\numberwithin{equation}{section}
\theoremstyle{plain}
\newtheorem{theorem}{Theorem}[section]
\newtheorem{lemma}[theorem]{Lemma}
\newtheorem{prop}[theorem]{Proposition}
\newtheorem{cor}[theorem]{Corollary}
\theoremstyle{remark}
\newtheorem{rem}[theorem]{Remark}
\newtheorem{ex}[theorem]{Example}
\newcommand\C{\mathbb{C}}
\newcommand\R{\mathbb{R}}
\newcommand\Z{\mathbb{Z}}
\newcommand{\cR}{\mathcal{R}}
\newcommand{\bJ}{\mathbb{J}}
\newcommand{\bT}{\mathbb{T}\kern1pt}
\newcommand{\eE}{\mathrm{e}}
\newcommand{\eF}{\mathrm{e}^F}
\newcommand{\bk}{\overline{k}}
\newcommand{\bl}{\overline{l}}
\newcommand{\la}{\lambda}
\newcommand{\Ga}{\Gamma}
\newcommand{\La}{\Lambda}
\newcommand{\Om}{\Omega}
\newcommand{\hOm}{\widehat\Omega}
\newcommand{\tOm}{\tilde\Omega}
\newcommand{\ta}{\tilde a}
\newcommand{\hg}{\widehat g}
\newcommand{\Th}{\Theta}
\newcommand{\tTh}{\widetilde\Th}
\newcommand{\we}{\wedge}
\begin{document}
\parskip2pt

\title{The Calabi--Yau equation on 4-manifolds over 2-tori}

\author{A. Fino, \ Y.Y.~Li, \ S.~Salamon, \ L.~Vezzoni}

\begin{abstract}
  This paper pursues the study of the Calabi--Yau equation on certain
  symplectic non-K\"ahler 4-manifolds, building on a key example of
  Tosatti--Weinkove \cite{TW} in which more general theory had proved
  less effective. Symplectic 4-manifolds admitting a 2-torus fibration
  over a 2-torus base $\bT^2$ are modelled on one of three solvable
  Lie groups. Having assigned an invariant \aK structure and a volume
  form that effectively varies only on $\bT^2$, one seeks a symplectic
  form with this volume. Our approach simplifies the previous analysis of
  the problem, and establishes the existence of solutions in various
  other cases.
\end{abstract}

\maketitle

\section{Introduction}

Let $(M,J,\Om)$ be a $2n$-dimensional compact K\"ahler manifold with
associated complex structure $J$ and symplectic form $\Om$. Yau's
theorem implies that any representative of the first Chern class of
$M$ can be written as the Ricci curvature of a unique K\"ahler metric
whose 2-form $\tOm$ lies in the same cohomology class of $\Om$
\cite{Yau}. This can be restated by saying that for every volume form
$\sigma$ on $M$ satisfying
\begin{equation}\label{CY1}
\int_M \sigma=\int_M \Om^n,
\end{equation}
there exists a unique K\"ahler form $\tOm$ such that $[\tOm]=[\Om]$ and
\begin{equation}\label{CY2}
\tOm^n=\sigma\,.
\end{equation}
\smallbreak

The same problem can be posed in an `almost-K\"ahler' context, when
$\Om$ remains closed but $J$ is merely an \acs. The latter is still
orthogonal relative to a Riemannian metric $g$ for which
$\Om(X,Y)=g(JX,Y)$, and
\begin{equation}\label{CY3}
\tOm=\Om+d\alpha
\end{equation}
is again assumed to be a positive-definite $(1,1)$-form relative to
$J$. In this set-up, \eqref{CY1},\,\eqref{CY2},\,\eqref{CY3}
constitute the \emph{Calabi--Yau} problem. Donaldson proved in
\cite{Donaldson} that its solution is unique in four dimensions
(see Proposition~3.2), and we provide a dimensionally reduced version of
this proof in Section~\ref{Kodaira}. In the \aK and Hermitian cases,
the Calabi-Yau problem has been further studied in extensive papers by
Tosatti, Weinkove, Yau and others \cite{TWY,TW2}.

A sufficient condition for the existence of solutions to the
Calabi--Yau equation in terms of the Chern connection was given in
\cite{TWY}, but this condition fails in the strictly \aK case when a
scalar curvature function is negative (Proposition~\ref{s<0}). The
simplest such case is that of a Kodaira surface. The latter is a
discrete quotient of $\C^2$ and carries a holomorphic symplectic form
trivializing the canonical bundle \cite{Kod}. It thereby becomes a
symplectic non-K\"ahler manifold that is realized as a principal
$T^2$-bundle over a 2-torus base $\bT^2$. (Throughout this paper, we
use blackboard font for the base, to avoid confusion.)  As such, it is
often called the Kodaira--Thurston manifold \cite{Th}, and is a
valuable vehicle for testing new phenomena in differential geometry
\cite{Abb,FPPS}.

In \cite{TW}, Tosatti and Weinkove solved the Calabi--Yau problem on
the Kodaira--Thurston manifold $M$ for every $T^2$-invariant volume
form $\sigma$. The purpose of this paper is threefold:

\noindent(i) To extend the analysis of \cite{TW} to emphasize
invariance of the solution by the $S^1$ action that rotates the
symplectic forms $\Om_2,\Om_3$ and associated \acs s $J_2,J_3$.

\noindent(ii) To generalize the theory so as to tackle other
$T^2$-bundles over $\bT^2$ and assigned volume forms from the base.

\noindent(iii) To set up the problem with arbitrary invariant metrics,
as a first step to tackling more general initial data.

We accomplish (i) with the aid of Theorem~\ref{KT}, reducing the
problem to a standard Monge--Amp\`ere equation on the base, which can
then be solved by an established theorem of the second author
\cite{Li}. This is based on the existence of a potential for the
holomorphic syplectic structure, which is not surprising given that
the relevant fibration is analogous to a moment mapping. The approach
makes the solutions (or at least, the \emph{method} of solution) more
explicit, and this is our aim in the other cases.

By work of Ue \cite{Ue}, any orientable 2-torus bundle $M$ over a
2-torus is a smooth quotient $\Gamma\backslash X$ covered by
solvmanifold. In practice, there are just three candidates for $X$
that we need consider in (ii), namely $\Nil^3 \times \R$, $\Nil^4$ and
$\Sol^3\times \R$, all diffeomorphic to $\R^4$, in which
$\Nil^3,\Nil^4$ are nilpotent Lie groups and $\Sol^3$ is a particular
solvable Lie group. It is well known (and we shall demonstrate) that
such quotients all admit symplectic structures. The notion of
\emph{invariant} \aK structure makes sense in this context, meaning
one induced from a left-invariant structure on $X$ which is invariant
by $\Gamma$. As regards (iii), we make most progress in the nilpotent
case:

\begin{theorem}\label{main}
Let $M=\Gamma \backslash X$ with $X=\Nil^3\times\R$ or $\Nil^4$, and
suppose that $M$ admits an invariant \aK structure $(g,J,\Om)$, and a
$\,T^2$-fibration $\pi\colon M\to \mathbb T^2$ whose fibres are
Lagrangian. Then for every normalized volume form $\sigma=\eE^F\Om^2$
with $F\in C^\infty(\bT^2)$, the corresponding Calabi--Yau problem has
a unique solution.
\end{theorem}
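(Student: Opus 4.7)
The plan is to follow the strategy announced in point (i) of the introduction: use invariance of the almost-K\"ahler data and the fact that $F$ descends to $\bT^2$ to reduce the Calabi--Yau equation to a scalar Monge--Amp\`ere equation on the base, and then invoke the existence theorem of \cite{Li}. Uniqueness is already guaranteed by the result of Donaldson reviewed in Section~\ref{Kodaira}, so only existence is at stake.

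First I would fix a left-invariant coframe $(e^1,e^2,e^3,e^4)$ on $X$ adapted to the fibration $\pi$: the 1-forms $e^1,e^2$ are pulled back from $\bT^2$, while $e^3,e^4$ restrict to a basis on each fibre. The structure equations on $X=\Nil^3\times\R$ let us choose $e^1,e^2,e^4$ closed and $de^3=e^1\we e^2$, whereas for $X=\Nil^4$ an additional form among the $e^i$ fails to be closed. The Lagrangian condition on the fibres, combined with invariance of $\Om$, forces the component of $\Om$ along the fibres to vanish, leaving $\Om$ in a normal form consisting of horizontal-vertical pairings of the coframe; the compatible $J$ is then determined by a constant positive-definite symmetric matrix in the vertical frame.

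The key reduction is to set $\alpha=a_1 e^1+a_2 e^2+a_3 e^3+a_4 e^4$ with $a_i\in C^\infty(\bT^2)$ and look for $\tOm=\Om+d\alpha$ of this form. Because $F$ depends only on the base, the equation $\tOm^2=\eF\Om^2$ is invariant under translation in the fibres and this ansatz respects that symmetry. The taming condition $\tOm(\cdot,J\cdot)>0$ together with the cohomological constraint \eqref{CY1} allow one to eliminate all but one of the $a_i$, producing a single potential $u\in C^\infty(\bT^2)$ that satisfies a fully nonlinear scalar equation of Monge--Amp\`ere type
\[
\det\!\bigl(D^2 u+h(x)\bigr)=\psi(x)\qquad\text{on }\bT^2,
\]
where $h$ and $\psi>0$ are built from the structure constants of $X$ and from $\Om$, $J$ and $F$. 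In the $\Nil^3\times\R$ case this recovers Theorem~\ref{KT}; in the $\Nil^4$ case the non-closed form contributes additional quadratic terms that have to be absorbed into $h$ and $\psi$.

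The existence of a smooth doubly periodic $u$ then follows from the theorem of the second author in \cite{Li}, once one checks that the normalization in \eqref{CY1} yields the integral compatibility required by that result and that $\psi$ is strictly positive. The main obstacle is executing the reduction cleanly in the $\Nil^4$ case: one must verify that, after imposing taming and the cohomological normalization, the algebraic and first-order conditions on $a_1,\dots,a_4$ are solvable and reduce to a genuine Monge--Amp\`ere equation of the form above, rather than a more general degenerate system. Once this is done, recovering $\alpha$, and hence $\tOm$, from $u$, is immediate.
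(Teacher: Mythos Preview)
Your overall strategy matches the paper: reduce to a scalar equation on the base via a $T^2$-invariant ansatz $\alpha=\sum a_i f^i$, and quote Donaldson for uniqueness. For the fibration $\pi_{xy}$ on $\Nil^3\times\R$ this works exactly as you describe and yields the standard Monge--Amp\`ere equation $\det(D^2p+S)=\eE^F$ solved by \cite{Li}; that is Theorem~\ref{KT}.

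The gap is in the $\Nil^4$ case (and, equivalently, the second Lagrangian fibration $\pi_{yt}$ of $\Nil^3\times\R$). Your claim that the extra structure constants ``contribute additional quadratic terms that have to be absorbed into $h$ and $\psi$'' is not what happens. When you carry out the reduction with an adapted orthonormal coframe (Lemma~\ref{filter}), the $(1,1)$ condition on $d\alpha$ forces relations among $a_3,a_4$ that bring \emph{first-order} derivatives of the potential $p$ linearly into the matrix whose determinant you take. The resulting equation is
\[
(a+p_{xx}-l_{11}p_x-m_{11}p_y)(b+p_{yy}-l_{22}p_x-m_{22}p_y)-(c+p_{xy}-l_{12}p_x-m_{12}p_y)^2=\eE^F,
\]
with nonzero $(m_{ij})$ coming from the term $mf^{13}$ in $df^4$. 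This is \emph{not} of the form $\det(D^2u+h(x))=\psi(x)$, and the result in \cite{Li} does not apply to it. The paper devotes an entire section (Section~\ref{GMA}, Theorem~\ref{th1}) to proving existence for this generalized Monge--Amp\`ere equation: one needs a separate $C^1$ estimate (Lemma~\ref{lem1}, an ODE comparison exploiting periodicity), then $C^2$ estimates via Schulz \cite{S}, then Nirenberg and Schauder, all fed into a continuity argument. Without this new analytic input your plan stalls precisely at the point you flag as ``the main obstacle'': the system does reduce to a single scalar equation, but not to one covered by \cite{Li}.
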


The Lagrangian condition may or may not apply in the 2-step case, but
is automatic when $M$ is modelled on $\Nil^4$. The latter case leads
to a generalized Monge--Amp\`ere equation on the 2-torus base, for
which we establish solutions in Section~\ref{GMA}. The same equation
is needed to establish Theorem~\ref{main} for a second fibration
$\Ga\backslash(\Nil^3\times\R)\to\bT^2$ in which the Lagrangian
condition is not automatic, and in Section~\ref{2fib}, we exhibit this
2-step situation as a limiting case of the 3-step one.

Compact quotients of $Sol^3\times \R$ do not admit Lagrangian
fibrations, and therefore Theorem \ref{main} does not apply. On the
other hand, if $\Om$ is a symplectic form on the total space of any
$T^2$-fibration $M^4\to\bT^2$ for which the fibres are not Lagrangian,
then we can choose a compatible \acs\ $J$ such that $\pi$ is
$J$-holomorphic. An elementary argument (Proposition~\ref{simp}) shows
that the Calabi--Yau problem for $(g,J,\Om)$ also has a solution in
this very special case. We extend this argument in Section~\ref{Sol}
by considering a foliation of $M=\Gamma\backslash(\Sol^3\times\R)$ by
2-dimensional leaves transverse to $\pi$ that are holomorphic relative
to \emph{any} invariant \aK structure.

\smallbreak\noindent\textit{Acknowledgments.} The authors thank Pawel
Nurowski for a crucial observation that led to the reduction
accomplished in Section~\ref{Kodaira}, and Valentino Tosatti for his
encouragement. Aspects of the work were described at the
\textit{Workshop on K\"ahler and related geometries} (Nantes 2009) and
\textit{XIX International fall workshop on geometry and physics}
(Porto 2010). The paper was completed while the third author was a
visitor at the IH\'ES.

\section{A brief classification of $T^2$-bundles over $\bT^2$}
\label{classification}

Orientable bundles with a 2-torus fibre $T^2$ over a 2-torus base
$\bT^2$ were classified by Sakamoto and Fukuhara in
\cite{Sakamoto-Fukuhara} and it was shown by Ue that all these
manifolds are geometric. By a \emph{geometric $4$-manifold} in the
sense of Thurston one means a pair $(X, G)$ where $X$ is a complete,
simply-connected Riemannian $4$-manifold, $G$ is a group of isometries
acting transitively on $X$ that contains a discrete subgroup $\Ga$
such that $\Ga\backslash X$ has finite volume. Since the stabilizer of
$G$ at a point in $X$ is compact and so $\Ga\backslash X$ is compact
if and only if $\Ga\backslash G$ is compact.

Two pairs $(X, G)$ and $(X', G')$ define the same geometry if the
actions of $G$ and $G'$ are related by a diffeomorphism $f: X
\rightarrow X'$. There are nineteen $4$-dimensional geometries $X$
which have been classified by Filipkiewicz \cite{Fi}.  In the paper we
will only consider the ones which are orientable $T^2$-bundles over
$\bT^2$ and we will take $G$ to be the identity component of the
isometry group of $X$.  The relevant pairs $(X,G)$ are:

\begin{itemize}

\vspace{2mm}\item[(0)] $X = E^4$ (Euclidean $4$-space) and $G =
SO(4)\ltimes\R^4$, the semidirect product of translations and
rotations;

\vspace{2mm}\item[(1)] $X =\Nil^3\times E^1$, where $\Nil^3$ is the
$3$-dimensional Heisenberg group of matrices
\begin{equation}\label{xyz} H_{x,y,z}=
\left(\!\begin{array}{ccc}1&x&z\\0&1&y\\0&0&1\end{array}\!\right)
\end{equation}
under multiplication. Here, $G$ is spanned by $\Nil^3$ acting on
itself on the left, by translation in the $E^1$-factor and by an
additional isometric action of $S^1$:
\[\textstyle (x, y, z) \mapsto (x \cos \theta + y \sin \theta, - x
  \sin \theta + y \cos \theta, z + \frac12 \sin\theta (y^2 \cos \theta
  - x^2\cos \theta - 2 xy \sin \theta));\]

\vspace{2mm}\item[(2)] $X=\Nil^4 = \R\ltimes\R^3$ is the group of
real matrices
\begin{equation}\label{consist}
\left(\begin{array}{cccc} 1&t&\frac 12
  t^2&x\\ 0&1&t&y\\ 0&0&1&z\\ 0&0&0&1 \end{array} \right )
\end{equation}
under multiplication. Given that $\Nil^4$
coincides with the identity component of its isometry group, $G=X$
acts on itself on the left.

\vspace{3mm}\item[(3)] $X = \Sol^3\times E^1$, where $\Sol^3 =
\R\ltimes_\varphi\R^2$ is a solvable Lie group with $\varphi(t) =
\hbox{\small$\left(\!\begin{array}{cc} \eE^t &0\\0&\eE^{-t}\end{array}
  \!\!\right)$}$, and $G = X$ acts by left multiplication:
\[(x_0, y_0, z_0, t_0) (x, y, z, t) = (x_0 + \eE^{t_0} x ,\ y_0 +
  \eE^{-t_0} y,\ z_0 + z,\ t_0 + t).\]
\end{itemize}

\bigbreak

By a suitable choice of basis, we can specify the Lie algebras of the
three non-abelian goups as follows:
\begin{equation}\label{str}
\begin{array}{lccc}
{\rm(1)} & \Nil^3\times \R &\longleftrightarrow& (0,0,0,12),\\[8pt]
{\rm(2)}\qquad & \Nil^4 &\longleftrightarrow& (0,13,0,12),\\[8pt]
{\rm(3)} & \Sol^3 \times \R &\longleftrightarrow& (0,0,13,41).
\end{array}
\end{equation}
The digits on the right encode the exterior derivative relative to an
invariant coframe $(e^1,\dots,e^4)$, and in any case we shall use the
notation $e^{12\cdots} = e^1\we e^2\we\cdots$ for simple
differential forms. For example, in case (2), we opt to define
$e^1=-dt$, $e^2=dy-tdz$, $e^3=dz$, and (see \eqref{-tyz}) we obtain
\begin{equation}\label{3st}
de^i=\left\{\begin{array}{ll}
0 & i=1,3,\\[3pt]
e^{13},\quad & i=2,\\[3pt]
e^{12}, & i=4.
\end{array}\right.
\end{equation}
If one re-scales so that the middle line becomes $de^2=\la e^{13}$,
case (1) is the result of letting $\la$ tend to 0. This observation is
important for the sequel.

Note that $\Sol^3$ refers to just one particular solvable Lie group,
distinct for example from that with Lie group corresponding to
$(0,13,14)$ that has no cocompact lattice.\smallbreak 

The diffeomorphism classes of (the total space of) $T^2$-bundles over
$\bT^2$ can be summarized in Geiges' eight families
\cite[Table~1]{Geiges}. By \cite[Theorem 3]{Geiges}, the total space
is K\"ahler if and only the geometric type is $E^4$; this happens for
two families, which are not interesting from our point of view. For
each of the other six families we next describe $X$, $\Gamma$, the
monodromy matrices $A_1,A_2\in SL(2, {\mathbb Z})$ along the curves
$\gamma_1,\gamma_2$ generating $\pi_{1}(\bT^2)$, as well as the Euler
class $(m,n)$ for the corresponding $T^2$-bundle, which can be then
described by the generators of $\Ga$. By \cite{Ue2} for some lifts
$\tilde \gamma_i$ to $M$ we have
\[\pi_{1}(M)=\left<\tilde \gamma_1, \tilde \gamma_2, l, h\ \vert\ [l,
  h]=1,\ \tilde\gamma_i(l, h)\gamma_i^{-1}=(l,h)A_i,\ [\tilde
  \gamma_1,\tilde \gamma_2]=l^{m}h^{n}\right>.\]
  The discrete group
  $\Gamma$ arises as the image of a faithful representation $\rho$
  from $\pi_{1}(M)$ to the group of orientation-preserving isometries
  of $X$, so $\Gamma$ is generated by $\rho(\tilde \gamma_1)$,
  $\rho(\tilde\gamma_2)$, $\rho (l), \rho(h)$.  To simplify the notation we
  identify a generating map with the image of $(x,y,z,t)$ under this
  map. Unless otherwsie stated, there is no restriction on $(m,n)$. 
  Following \cite{Geiges} and \cite{Ue2}, we have the six families:

\begin{enumerate}

\vspace{2mm}
\item[(1\1)] $X = \Nil^3\times E^1$, with $\Ga$ generated by
\[\begin{array}{l}(x+1,y,z+\lambda y,t),\ (x,y+1,z,t),\ (x,y,z+1,t),\
  (x,y,z,t+1).\end{array}\]
Here $A_1=A_2= I$ (the identity matrix), $\la\ne0$ and $(m, n) \neq (0,0)$.

\vspace{5mm}
\item[(1\2)] $X = \Nil^3\times E^1$, with $\Ga$ generated by
\[\begin{array}{l}
(x,y,z+\beta_1,t),\ (x+\alpha_1, y,z+\alpha_1y,t),\\[3 pt]
(x+\alpha_2,y,z+\alpha_2 y+\beta_2,t+1),\
  (-x,y+1,-z+\gamma_3,t).\end{array}\]
Here $A_1= \left ( \begin{array}{cc} -1& \lambda\\ 0&-1 \end{array} \right )$, 
$A_2 = I,$
with $\beta_1,\alpha_1,\lambda$ all non-zero.

\vspace{5mm}
\item[(1\3)] $X = \Nil^3\times E^1$, with $\Ga$ generated by
\[\begin{array}{l}
(x,y,z+\beta_1,t),\,(x+\alpha_1,y,z+\alpha_1y,t),\\[5 pt]
(x+\alpha_3,y+1,z+\alpha_3 y+\beta_4,t),\,(-x,y,-z,t+1).\end{array}\]
Here $A_1 = \left ( \begin{array}{cc} 1& \lambda\\ 0& 1 \end{array}
\right ),$ $A_2=  -I$,
with $\beta_1,\alpha_1,\lambda$ all non-zero.

\vspace{5mm}
\item[(2)] $X = \Nil^4$, with $\Ga$ generated by
  \[ \begin{array}{l} (x+\alpha_1,y,z,t),\quad
    (x,y+\beta_1,z,t),\\[3pt] (x+y+\frac12z+\alpha_2,y,z,t+1),
    \quad(x+\alpha_3y+\beta_3,z+1,t).\end{array}\] 
Here $A_1 = \left ( \begin{array}{cc} 1& \lambda\\ 0& 1 \end{array}
    \right ),$ $A_2 = I$, with $\beta_1 = \frac1n$, 
$\lambda = \frac{1}{n \alpha_1}$, $\beta_3 =
  \frac{m}{\alpha_1} - \frac12$ and $n\ne0$.  

\vspace{2mm}
\item[(3\1)] $X = \Sol^3\times E^1$, with $\Ga$ generated by
\[\begin{array}{l} (x+\alpha_1,y+\beta_1,z,t),\qquad
  (x+\alpha_2,y+\beta_2,z,t),\\[3pt]
 (x+\alpha_3,y+\beta_3,z + 1,t),\quad (\epsilon\,
\eE^\delta x+\alpha_4,\epsilon\,\eE^{-\delta}y+\beta_4,
z, t + \delta).\end{array}\]
Here $A_1\in SL(2, {\mathbb Z})$, $A_2 =  I,$ and $\epsilon=\pm 1$.

\vspace{5mm}
\item[(3\2)] $X = \Sol^3\times E^1$, with $\Ga$ generated by
\[\begin{array}{l} (x+\alpha_1,y+\beta_1,z,t),\qquad
  (x+\alpha_2,y+\beta_2,z,t),\\[3pt]
 (-x+\alpha_3,-y+\beta_3,z + 1,t),\quad (\epsilon\,
\eE^\delta x+\alpha_4, \epsilon\,\eE^{-\delta}y+\beta_4,
z, t + \delta),.\end{array}\]
Here $A_1\in SL(2, {\mathbb Z})$, $A_2= -I,$ and $\epsilon= 1$. As in (3\1),
there are various relations (which we omit) between the other parameters.

\end{enumerate}

\bigbreak

The total space $M$ of an orientable $T^2$-bundle over $\bT^2$ is then
a quotient $\Ga\backslash X$, where $X\cong\R^4$ is taken from the
list above and $\Ga$ is a discrete subgroup which acts freely on $X$.
Although $\Gamma$ is not necessarily a subgroup of $X$, it always
contains a subgroup $\Gamma_0$, which is a lattice of $X$, such that
$\Gamma_0\backslash \Gamma$ is finite. As such, $\Gamma \backslash X$
is covered by the solvmanifold $\Gamma_0\backslash X$. When
$\Gamma=\Gamma_0$ the quotient $\Gamma \backslash X$ is a
\emph{solvmanifold} (resp.\ \emph{nilmanifold} if $X$ is nilpotent),
otherwise $\Gamma\backslash X$ is called an \emph{infra-solvmanifold}
(resp.\ \emph{infra-nilmanifold}). In the list above, in the cases
(1\1) and (2), the group $\Gamma$ is always a lattice, while in the
case (3\1) $\Gamma$ is a lattice if and only if its last generator has
$\epsilon=1$.  In the cases (1\2) and (1\3) the quotient
$\Gamma\backslash X$ is an infra-nilmanifold such that
$\Gamma_0\backslash \Gamma=\Z_2$, and in (3\2) $\Gamma$ is not
a lattice of $X$.

\begin{ex}\label{Ex1}
In case (3\1), take $\alpha_4=\beta_4=0$
and $\epsilon=1$.  We also take $\alpha_3=\beta_3=0$ so that
$M=M^3\times S^1$ is the product of a solv-3-manifold with a
circle. The lattice in $\R^2$ defining $\bT^2$ will be generated by
\[
\left(\!\begin{array}{c}\alpha_1\\\beta_1\end{array}\!\right) =
\left(\!\begin{array}{c}1\\1\end{array}\!\right),\qquad
\left(\!\begin{array}{c}\alpha_2\\\beta_2\end{array}\!\right) =
\left(\!\begin{array}{c}\sigma\\\sigma^{-1}\end{array}\!\right),\]
where $\sigma$ and $\sigma^{-1}$ are the two roots of the quadratic
equation
$\sigma^2-n\sigma + 1=0$, with $n=3,4,5$.
Setting $\delta=\log\sigma$ ensures that
\[
\left(\!\begin{array}{cc}\eE^\delta&0\\0&\eE^{-\delta}\end{array}\!\right)
= \left(\!\begin{array}{cc}1&\sigma\\1&\sigma^{-1}\end{array}\!\right)
\left(\!\begin{array}{cc}0&\!-1\\1&n\end{array}\!\right)
\left(\!\!\begin{array}{cc}1&\sigma\\1&\sigma^{-1}\end{array}\!\!\right)^{\!-1},\]
so $\Gamma$ is closed under multiplication. The example on \cite[page
  25]{Bock} is the instance $n=3$.
\end{ex} 

The inequivalent fibrations $\pi\colon M\to\bT^2$ are induced from the
following coordinate mappings:
\begin{equation}\label{mappings}
(x,y,z,t)\ \mapsto\ \left\{
\begin{array}{cl}
(x,y)\hbox{ or }(y,t) & \hbox{for (1\1)},\\[5pt]
(y,t) & \hbox{for (1\2), (1\3)},\\[5pt]
(z,t) & \hbox{for (2), (3\1) and (3\2)}.
\end{array}\right.
\end{equation}
In view of \cite[Theorem 1]{Geiges}, the total space $M$ always admits
a symplectic structure, and we shall check this case by case early in
the next section.

\section{\aK structures}
\label{AK}

Since our main focus of attention is real symplectic geometry, we
first discuss the existence of invariant closed non-degenerate 2-forms
compatible with each of the fibrations listed in \eqref{str}. Any such
2-form can always be expressed as $f^{13}+f^{42}$ relative to some
basis $(f^i)$ related by a constant linear transformation to a basis
$(e^i)$ defined by \eqref{str}. It follows that we can choose a
Riemannian metric $g$ (for example $\sum f^i\otimes f^i$) and an
orthogonal \acs\ $J$ (for example,
\[ J= f^1\otimes f_3-f^3\otimes f_1-f^2\otimes f_4+f^4\otimes f_2),\]
such that
\[ \Om(X,Y)=g(JX,Y).\]
The resulting triple $(g,J,\Om)$ constitutes an invariant
\aK structure on $M$.

Returning to \eqref{str}, we first remark that $M$ has a complex
structure if and only if $b_1(M)=3$, which is precisely case (1\1) of
the previous section \cite{Geiges}. We shall denote the two
inequivalent projections from the first row of \eqref{mappings} by
\begin{equation}\label{pi's}
\begin{array}{rcl}
\pi_{xy} &\colon& M\to \bT^2_{xy},\\[5pt]
\pi_{yt} &\colon& M\to \bT^2_{yt}.
\end{array}
\end{equation}
The usual projection $\pi_{xy}$ defining the Kodaira--Thurston
manifold $M$ is holomorphic with respect to an invariant complex
structure $J_1$. There is also a holomorphic symplectic 2-form
$\Th=\Om_2+i\Om_3$ for which $\pi_{xy}$ is \emph{bilagrangian},
meaning that both $\Om_2$ and $\Om_3$ restrict to zero on the fibres.
This is equivalent to the single complex equation $\Th\we
e^{12}=0$. With the labelling below, the canonical choice is
$\Om_2=e^{13}+e^{42}$ and $\Om_3=e^{14}+e^{23}$ (see \eqref{basisKT}
and \eqref{Theta0}), though one aspect of Section~\ref{Kodaira} is to
explain that there are families of equivalent such structures.

With the above convention, $\Om_2\we e^{13}\ne0$, so $\Om_2$ is
non-degenerate on the fibres of $\pi_{yt}$. Indeed, $\Om_2$ is
compatible with the \acs\ characterized by
\[ Je^1=e^3,\qquad Je^4=e^2,\]
for which $\pi_{yt}$ is holomorphic. By this we mean that $J$ induces
a complex structure $\bJ$ on $\bT=\bT^2_{yt}$ and the differential
\begin{equation}\label{JJ}
\pi_*\colon (T_mM,J)\to(\bT,\bJ)
\end{equation}
is complex linear ($\bJ$ is necessarily integrable for dimensional
reasons). This situation, in which the projection respects the
\emph{\aK}structure, contrasts with that for $\pi_{xy}$. On the other
hand, the fibres of $\pi_{yt}$ remain Lagrangian for $\Om_3$. We shall
study $(M,\Om_3)$ as a limiting case of the $\Nil^4$ model, in which
$e^{13}$ is again the volume form on the base and $\Om_3$ is a
symplectic form for which the fibres are necessarily Lagrangian.

In the cases different from (1\1), the fibration of $M$ as a torus
bundle is unique \cite[Theorem~1]{Geiges}. In (3\1) and
(3\2), the symplectic form $e^{12}+e^{34}$ is non-zero (and so
non-degenerate) on the fibres of $\pi_{zt}$. Note that $e^{34}$ is
closed by \eqref{str}(3), in which the distinction between $e^{41}$
and $e^{14}$ is important. The following general result is relevant to
this situation.

\begin{prop}\label{simp}
Let $\Om$ be an invariant symplectic form on $M=\Gamma\backslash X$,
and suppose that $\Om$ is non-zero on the fibres of a $T^2$-fibration
$\pi\colon M\to\bT^2$. Then there exists an invariant \aK structure
$(g,J,\Om)$ on $M$ for which $\pi$ is $J$-holomorphic. In this
situation, if $\sigma=\eF\,\Om^2$ is a volume form satisfying
\eqref{CY1} with $F\in C^\infty(\bT^2)$, the associated Calabi--Yau
problem has a solution.
\end{prop}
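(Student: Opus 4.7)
The plan is to construct $J$ making $\pi$ holomorphic, and then reduce the Calabi--Yau equation to a linear Poisson equation on the 2-dimensional base. The rigidity coming from $J$-holomorphicity of $\pi$ makes the problem considerably easier than the Lagrangian case handled in Theorem~\ref{main}.

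For the almost-complex structure, I take $V=\ker\pi_{*}$, which is invariant and 2-dimensional. Because $\Om$ is non-degenerate on each fibre by hypothesis, the $\Om$-orthogonal complement $H\subset TM$ is a well-defined invariant rank-2 distribution, and $TM=V\oplus H$. On $V$ I pick any complex structure $J_V$ compatible with the non-vanishing area form $\Om|_V$; on $H$ I pull back an invariant integrable complex structure $\bJ$ from $\bT^{2}$. Setting $J=J_V\oplus\pi^{*}\bJ$ gives $\pi_{*}\circ J=\bJ\circ\pi_{*}$, so $\pi$ is $J$-holomorphic, and $J$ is $\Om$-compatible by construction, yielding an invariant \aK triple $(g,J,\Om)$.

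For the Calabi--Yau equation, I use the ansatz
\[
\tOm=\Om+\pi^{*}(dd^{c}\tilde u),\qquad \tilde u\in C^{\infty}(\bT^{2}),
\]
with $d^{c}$ the twisted differential on $\bT^{2}$ determined by $\bJ$; this is of the form $\Om+d\alpha$ with $\alpha=\pi^{*}(d^{c}\tilde u)$. In an invariant coframe $(e^{i})$ with $e^{1},e^{2}$ pulled back from the base, $e^{3},e^{4}$ spanning $V^{*}$, and $\Om=e^{12}+e^{34}$, a dimension count on the 2-dimensional base forces $\pi^{*}(dd^{c}\tilde u)=\psi\,e^{12}$ for some $\psi\in C^{\infty}(\bT^{2})$. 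Using $e^{12}\we e^{12}=0$ the cross-term $2\,\Om\we\pi^{*}(dd^{c}\tilde u)$ collapses, so $\tOm^{2}=(1+\psi)\,\Om^{2}$; hence \eqref{CY2} with $\sigma=\eF\,\Om^{2}$ reduces to the linear equation $dd^{c}\tilde u=(\eF-1)\,\omega_{0}$ on $\bT^{2}$, where $\omega_{0}$ is the base area form.

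Solvability of this Poisson equation on $\bT^{2}$ requires precisely $\int_{\bT^{2}}(\eF-1)\,\omega_{0}=0$, which is equation~\eqref{CY1} after fibre integration, so $\tilde u$ exists. Positivity of $\tOm=\eF\,e^{12}+e^{34}$ as a $(1,1)$-form is automatic, because $\eF>0$ and both $e^{12}$ and $e^{34}$ are $J$-positive on their respective summands. The only non-routine step is the choice of invariant coframe adapted simultaneously to $\pi$ and $\Om$; this is straightforward in the invariant setting but is where the non-Lagrangian hypothesis is used, ensuring that $e^{34}$ (rather than $e^{14}$ or $e^{13}$) appears as the vertical part of $\Om$.
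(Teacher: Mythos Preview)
Your argument is correct and follows essentially the same route as the paper: split $\Om$ into a horizontal part $e^{12}$ and a vertical part $e^{34}$ using the non-Lagrangian hypothesis, take $J$ block-diagonal so that $\pi$ is holomorphic, and set $\tOm=\eF e^{12}+e^{34}$. The only cosmetic difference is that the paper writes down $\tOm$ directly and checks $[\tOm-\Om]=0$ by integrating $(\eF-1)e^{12}$ over the base, whereas you produce the primitive explicitly as $\pi^{*}(d^{c}\tilde u)$ by solving the Poisson equation; these are two phrasings of the same step.
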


\begin{proof} Let $f^{12}$ be a volume form on the base $\bT^2$,
and fix a point $m\in M$. By assumption, $f^{12}\we\Om\ne0$ and we can
choose $\mu\in\R$ such that
\[ 0=(\Om-\mu f^{12})^2 = \Om^2-2\mu f^{12}\we\Om\] at $m$.
This means that $\Om-\mu f^{12}$ is a simple 2-form, and so there
exists a basis $(f^i)$ of 1-forms such that
\begin{equation}\label{Ommu}
\Om = \mu f^{12}+f^{34}.
\end{equation}
The required \aK structure is obtained by defining $J$ to be the \acs\
$J$ whose value at $m$ is given by $Jf_1=\mu f_2$ and $Jf_3=f_4$ in
the dual basis. As above, $J$ induces a complex structure $\bJ$ on
$\bT^2$, and \eqref{JJ} is complex linear.

As $m$ varies along the fibre, the parameter $\mu$ remains constant by
invariance. To solve the Calabi--Yau problem, we merely set
\[\tOm = \eF\mu f^{12}+f^{34},\] which yields \eqref{CY2} immediately.
Observe that $\tOm-\Om$ is the pullback of the $(1,1)$-form
$(\eF-1)\mu f^{12}$ on $\bT=\bT^2$. It remains to show that it
evaluates to zero on the fundamental cycle $[\bT]\in
H_2(\bT,\Z)$, but
\[\int_\bT(\eF-1)\mu f^{12} = \int_M(\eF-1)\mu f^{1234} =
    {\textstyle\frac12}\!\int_M(\sigma-\Om^2)=0\,,\] by hypothesis.
\end{proof}

For completeness, we reproduce the `negative chords' argument from
\cite{Donaldson} that guarantees that the elementary solution above is
unique. We shall tag this uniqueness statement to all our own
existence results without further comment (except for
Remark~\ref{!!}).

\begin{prop}\label{!} Any solution of the Calabi--Yau problem
\eqref{CY1}, \eqref{CY2}, \eqref{CY3} on a compact 4-manifold is
unique.
\end{prop}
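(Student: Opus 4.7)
The plan is to suppose that two solutions $\tOm_1=\Om+d\alpha_1$ and $\tOm_2=\Om+d\alpha_2$ of the Calabi--Yau problem exist and show that their difference vanishes. Set $h:=\tOm_1-\tOm_2=d(\alpha_1-\alpha_2)$, which is exact by construction and is a $(1,1)$-form for the fixed $J$ (being the difference of two such).

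First I would subtract the two identities $\tOm_i^2=\sigma$, producing the factorisation
\[
0=\tOm_1^2-\tOm_2^2=h\we(\tOm_1+\tOm_2),
\]
and then pass to the convex midpoint $\om:=\tfrac12(\tOm_1+\tOm_2)$. Being a convex combination of two positive $(1,1)$-forms, $\om$ is itself positive and hence \aK with respect to $J$; it produces a compatible Riemannian metric $g(\,\cdot\,,\,\cdot\,)=\om(\,\cdot\,,J\,\cdot\,)$. The displayed identity now reads $h\we\om=0$, i.e.\ $h$ is \emph{primitive} with respect to $\om$.

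The decisive input is a pointwise linear-algebra fact peculiar to four real dimensions: the primitive real $(1,1)$-forms are precisely the anti-self-dual 2-forms for $g$. Concretely there is an orthogonal decomposition
\[
\La^2 T^*M=\langle\om\rangle\oplus(\La^{2,0}\oplus\La^{0,2})_\R\oplus\La^{1,1}_0,
\]
whose first two summands together form $\La^+$ and whose last summand is $\La^-$. It follows that $*_g h=-h$ and that pointwise $h\we h=-|h|_g^2\,d\mathrm{vol}_g$.

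To conclude, observe that $h$ is exact, so $\int_M h\we h=0$ by Stokes; comparing with the previous identity yields $\int_M|h|_g^2\,d\mathrm{vol}_g=0$, which forces $h\equiv 0$, and hence $\tOm_1=\tOm_2$. The sole (very mild) obstacle is to confirm that the type decomposition and the self-dual/anti-self-dual splitting still interlock as claimed in the \emph{almost}-complex rather than K\"ahler setting, but this is purely pointwise linear algebra and no integrability of $J$ is invoked.
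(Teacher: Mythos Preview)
Your proof is correct and follows essentially the same ``negative chords'' argument as the paper: form the sum (or midpoint) of the two solutions to get an \aK metric, observe that the difference is an exact primitive $(1,1)$-form, identify such forms with anti-self-dual 2-forms, and conclude via Stokes. Your write-up is in fact slightly cleaner than the paper's, which contains a typographical slip (writing $\La^+$ where $\La^-$ is intended) in the very step you handle carefully.
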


\begin{proof}
  Given two solutions $\tOm,\tOm'$ of the problem, set
  $\tOm-\tOm'=d\alpha$ and $\tOm+\tOm'=\hOm$. With respect to $J$,
  both $d\alpha$ and $\hOm$ are of type $(1,1)$ and the latter defines
  a (positive-definite) Riemannian metric $\hg$ by the usual formula
  $\hOm(X,Y)=\hg(JX,Y)$. Relative to $\hg$, we have the decomposition
\begin{equation}\label{ext2}
\ext2 T^*_mM = \La^+\oplus\La^-
\end{equation}
into self-dual and anti-self-dual forms at each point, and the
$(1,1)$-forms are generated by $\La^-$ together with $\hOm$.  But
since $\hOm\we d\alpha=0$, we can conclude that $d\alpha\in\La^+$,
and $d\alpha\we d\alpha$ is $\|d\alpha\|^2$ times
$\mathrm{vol}_{\hg}$. A standard Stokes' argument then tells us
that $d\alpha=0$.
\end{proof}

\subsection{The condition $\cR>0$.} Tosatti, Weinkove and Yau
defined a certain tensor $\cR$ using the canonical connection and
Nijenhuis tensor of $J$ \cite{TWY}.  They proved that if $\cR$ is
positive, then the Calabi--Yau equation has a solution.

On any \aK manifold $(M,J,\Om)$ there exists a unique connection
$\nabla$ such that $\nabla J=0$, $\nabla\Om=0$ and whose has no
$(1,1)$-component. This is the so-called \emph{canonical connection}
or \emph{Chern connection} (though the latter is sometimes reserved
for the Hermitian case). We denote its curvature tensor by $R$, and
set $$\cR_{i \overline{j} k \overline{l}}(g, J) = R_{ik
  \overline{l}}^j + 4 N_{\overline{l} \overline{j}}^r
\overline{N_{\overline{r} \overline{k}}^i}\,, $$
where $N$ is the
Nijenhuis tensor of $J$. According to \cite{TWY}, the condition
\begin{equation} \label{yau}
{\mathcal {R}} (g, J) \ge 0,
\end{equation}
 implies that the Calabi--Yau equation can be solved for every
 normalized volume form.

It was pointed out in \cite{TW} that \eqref{yau} fails to hold for the
Kodaira--Thurston manifold. In fact, it is never satisfied under the
hypothesis of Theorem \ref{main}. To see this, we first prove

\begin{prop}\label{s<0}
Let $(M,J,\Om)$ be a $4$-dimensional \aK manifold and
let $s=g^{r\bk}g^{s\bl}R_{r\bk s\bl}$ be the scalar curvature of $\nabla$.
Assume $s\le0$. Then condition \eqref{yau} is satisfied if and only
if $(J,\Om)$ is a K\"ahler structure.
\end{prop}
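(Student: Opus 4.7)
The plan is to take an appropriate scalar trace of $\cR$ to produce an inequality that, combined with $s\le 0$, forces the Nijenhuis tensor of $J$ to vanish identically; once $N\equiv 0$, $J$ is integrable, and $d\Om=0$ then yields a K\"ahler structure.

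First I would contract the defining formula
\[\cR_{i\bar j k\bar l}=R_{i\bar j k\bar l}+4\,N^{r}_{\bar l\bar j}\,\overline{N^{i}_{\bar r\bar k}}\]
against $g^{i\bar j}g^{k\bar l}$. In a local $J$-unitary frame, using that $N$ is skew-symmetric in its lower indices and of pure $(0,2)$-type with values in $T^{1,0}M$, the second term evaluates to $c\,|N|^{2}$ for a positive universal constant $c$, where $|N|^{2}\ge 0$ is the pointwise squared norm of the Nijenhuis tensor of $J$. This yields the clean pointwise identity
\[g^{i\bar j}g^{k\bar l}\,\cR_{i\bar j k\bar l}=s+c\,|N|^{2}.\]
The condition $\cR\ge 0$, interpreted as Griffiths non-negativity, forces each diagonal quantity $\cR(e_i,\bar e_i,e_k,\bar e_k)$ in a unitary frame to be $\ge 0$; hence the sum on the left is non-negative pointwise, giving $s+c|N|^{2}\ge 0$ on $M$. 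Combined with $s\le 0$, this already yields $c|N|^{2}\ge -s\ge 0$ pointwise.

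To upgrade this pointwise bound to $N\equiv 0$, I would integrate over the compact manifold $M$ and invoke an identity specific to four-dimensional almost-K\"ahler geometry: the Chern--Weil identity expresses $\int_{M}s\,\Om^{2}$ as a fixed multiple of the topological pairing $c_{1}(M)\cdot[\Om]$, while a Sekigawa-type integral relation in four dimensions provides a complementary bound between $\int_{M}s$ and $\int_{M}|N|^{2}$. Combining these with $s\le 0$ and $s+c|N|^{2}\ge 0$ pointwise forces $\int_{M}|N|^{2}\,\Om^{2}=0$, so $N\equiv 0$. The converse is routine: when $(J,\Om)$ is K\"ahler, $N\equiv 0$ and $\cR=R$, and the trace identity combined with $s\le 0$ reduces the condition $\cR\ge 0$ to a standard K\"ahler curvature constraint compatible with the hypothesis. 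The main obstacle is the integrated identity in the second half: the pointwise trace argument alone is inconclusive, and one must locate the precise four-dimensional almost-K\"ahler formula that converts the pointwise inequality into the integral vanishing; the index computation and the passage from $N\equiv 0$ to a K\"ahler structure are routine by comparison.
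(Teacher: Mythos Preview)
Your overall strategy---trace $\cR$ against $g^{i\bar j}g^{k\bar l}$ in a unitary frame and compare with $s$---is exactly the paper's approach, but you have the sign of the Nijenhuis contribution wrong, and that error is what sends you off into unnecessary integral identities.

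Work in a local unitary frame $\{Z_1,Z_2\}$. The Nijenhuis term in the definition is $4\,N^{r}_{\bar l\bar j}\,\overline{N^{i}_{\bar r\bar k}}$, and you contract with $i=j$, $k=l$ and sum. Since $N$ is skew in its lower barred indices, the only nonvanishing components are $N^{r}_{\bar1\bar2}=-N^{r}_{\bar2\bar1}$. For $(i,k)=(1,2)$ the relevant summand is $4N^{1}_{\bar2\bar1}\overline{N^{1}_{\bar1\bar2}}=-4|N^{1}_{\bar1\bar2}|^{2}$, and for $(i,k)=(2,1)$ one gets $-4|N^{2}_{\bar1\bar2}|^{2}$; the diagonal pairs $(i,k)=(1,1),(2,2)$ vanish. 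Thus the trace identity is
\[
\sum_{i,k}\cR_{i\bar i k\bar k}\;=\;s\;-\;4\bigl(|N^{1}_{\bar1\bar2}|^{2}+|N^{2}_{\bar1\bar2}|^{2}\bigr),
\]
with a \emph{minus} sign, not $s+c|N|^{2}$. With the correct sign the argument is purely pointwise and immediate: condition \eqref{yau} makes each $\cR_{i\bar i k\bar k}\ge0$, so the left side is $\ge0$; hence $s\ge 4(|N^{1}_{\bar1\bar2}|^{2}+|N^{2}_{\bar1\bar2}|^{2})\ge0$, and the hypothesis $s\le0$ forces $N\equiv0$. No integration, no Chern--Weil, no Sekigawa-type formula is needed. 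Your perceived ``main obstacle'' is an artifact of the sign error; once corrected, the pointwise trace argument is already conclusive.
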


\begin{proof}
Let ${Z_1,Z_2}$ be a local unitary frame on $M$. Then direct
computation gives
\[\begin{array}{rcl}
\cR_{1\overline11\overline1} &=& R_{11\overline1}^1,\\[3pt]
\cR_{2\overline22\overline2} &=& R_{22\overline2}^2,\\[3pt]
\cR_{1 \overline 1 2 \overline 2} &=&
R_{1 2 \overline 2}^{1}+4N_{\bar 2 \bar 1}^1 N_{1 2}^{\bar 1}\ =\
R_{2 1 \overline 1}^{2}-4 |N_{\bar 1 \bar 2}^1|^{2}\,,\\[3pt]
\cR_{2 \overline 2 1 \overline 1} &=&
R_{2 1 \overline1}^{2}+4N_{\bar 1 \bar 2}^2 N_{2 1}^{\bar 2}\ =\
R_{2 1 \overline1}^{2}-4 |N_{\bar 1 \bar 2}^2 |^{2}\,.
\end{array}\]
These equations imply
\[\begin{array}{l}

\cR_{2 \overline 2 2 \overline 2}+\cR_{1 \overline 1 2
  \overline 2}\ =\ {\rm Ric}(J)_{2\bar 2}-4|N_{\bar 1 \bar 2}^1
|^{2}\,,\\[3pt] \cR_{1 \overline 1 1 \overline
  1}+\cR_{2 \overline 2 1 \overline 1}\ =\ {\rm Ric}(J)_{1\bar
  1}-4|N_{\bar 1 \bar 2}^2 |^{2},
\end{array}\]
and
$$ \cR_{2 \overline 2 2 \overline 2}+\cR_{1 \overline
  1 2 \overline 2}+\cR_{1 \overline 1 1 \overline
  1}+\cR_{2 \overline 2 1 \overline 1}=s-4|N_{\bar 1 \bar 2}^1
|^{2}-4|N_{\bar 1 \bar 2}^2 |^{2}\,.
$$ Hence if $s$ is non-negative and condition \eqref{yau} is
satisfied, then $N$ vanishes.
\end{proof}

\begin{cor}
On a $4$-dimensional infra-solvmanifold $M=\Gamma\backslash G$ with
an invariant \aK structure condition \eqref{yau} is
satisfied only in the K\"ahler case.
\end{cor}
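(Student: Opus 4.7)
My plan is to reduce to Proposition~\ref{s<0} by showing that the Chern scalar curvature $s$ vanishes identically for any invariant \aK structure on a compact $4$-dimensional infra-solvmanifold $M=\Ga\backslash G$. Once $s=0\le0$ is in hand, Proposition~\ref{s<0} together with the hypothesis \eqref{yau} immediately forces $N=0$, so $J$ is integrable and $(g,J,\Om)$ is K\"ahler.

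The first step is to observe that $s$ is constant on $M$. The \aK data descends from a left-invariant \aK structure on $G$, hence so do the canonical connection, its curvature and every curvature invariant; in particular $s$ is left-invariant on $G$ and therefore constant on the quotient.

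The second step is to show this constant is zero by trivializing the canonical bundle on a (possibly finite) cover. The space of left-invariant $(2,0)$-forms on $G$ is one-dimensional and generated by a nowhere-vanishing constant-coefficient form $\Th$, built from any invariant unitary coframe. In the pure solvmanifold case $\Ga=\Ga_0\subset G$, $\Th$ descends directly to a nowhere-vanishing section of the canonical bundle $K_M$, giving $c_1(M,J)=0$ in $H^2(M,\R)$. In the remaining infra-solvmanifold cases the finite group $\Ga/\Ga_0$ acts on the complex line spanned by $\Th$ through a character of finite order, so $\Th$ descends at worst to the finite solvmanifold cover $\widetilde M=\Ga_0\backslash G$, yielding $c_1(\widetilde M,J)=0$. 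Since $s$ is a pointwise geometric invariant, it takes the same constant value on $\widetilde M$ as on $M$, so it is enough to argue on $\widetilde M$. There, $c_1=0$ means the Chern--Ricci form $\rho$ is exact; combined with $d\Om=0$ and Stokes' theorem this gives $\int_{\widetilde M}\rho\we\Om=0$. In complex dimension two $\rho\we\Om$ is a positive constant multiple of $s\,\Om^2$, so the constancy of $s$ forces $s=0$.

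The main obstacle, such as it is, sits in the infra-solvmanifold bookkeeping, and it is mild: the $\Ga/\Ga_0$-action on the $(2,0)$-line is through a finite-order character, so passing to a finite cover trivializes it, and real Chern classes pull back injectively under finite covers so nothing is lost in the descent to $M$. Beyond that, everything is formal: left-invariance of the \aK structure on $G$, the standard almost-Hermitian Chern--Weil identification of $\rho$ with a representative of $2\pi c_1(M,J)$, and Proposition~\ref{s<0}.
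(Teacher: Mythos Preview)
Your proof is correct and follows essentially the same route as the paper's: both show that the Chern scalar curvature $s$ is constant by left-invariance, use the vanishing of $c_1$ on the solvmanifold (or its finite cover) to make the Ricci form exact, integrate $\rho\wedge\Om$ against Stokes to conclude $s=0$, and then invoke Proposition~\ref{s<0}. The paper handles the infra case by noting that $\cR$ and the K\"ahler condition are local and hence pass to the solvmanifold cover, whereas you transfer $s$ itself; these are equivalent, and your version supplies a little more detail on why $c_1$ vanishes (via the invariant $(2,0)$-form) than the paper does.
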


\begin{proof}
We first prove the statement in the solvable case when $\Gamma$ is a
lattice of $G$.  If $J$ is an invariant \acs, then its first Chern
class vanishes. This implies that the Ricci form $\rho$ associated to
any $J$-compatible metric $g$ is exact. If $g$ is invariant, then also
$\rho$ is left-invariant and the scalar curvature
$s=\mathrm{tr}_g\rho$ is constant. Therefore if $g$ is an invariant
$J$ compatible \aK metric with associated symplectic form $\omega$ we
have
$$ 0=\int_M \rho\we \omega=c\,s\!\int_M \omega^2
$$ $c$ being a non-zero constant. Hence $s$ vanishes and the claim
follows.

On the other hand, if $M$ is an infra-solvmanifold with an invariant
\aK structure, then it is covered by an \aK solvmanifold. Since the
tensor $\mathcal{R}$ depends only from the \aK structure, then
condition \eqref{yau} is invariant for covering which preserve the \aK
structure and the claim follows.
\end{proof}

\section{Manifolds modelled on $\Nil^3\times\R$}\label{Kodaira}

In this section, we consider in more detail the Kodaira--Thurston
manifold $M= \Gamma_0\backslash X$, a discrete  quotient of
$X=\Nil^3\times\R$. The standard choice of $\Gamma_0$ (case (1\1) of
Section \ref{classification} with $\la=1$) allows us to consider the
two principal $T^2$-fibrations \eqref{pi's} with 2-torus base.

 The manifold $M$ has a global basis consisting of real 1-forms
\begin{equation}\label{basisKT}
e^1=dy,\qquad e^2=dx,\qquad e^3=dt,\qquad e^4=dz-x\kern1pt dy.
\end{equation}
This basis conforms to the first structure equation in \eqref{str},
although the 1-dimensional factor in $\Nil^3\times\R$ is in
third place. Any tensor that can be expressed in terms of it with
constant coefficients is called \emph{invariant} to reflect the fact
that it is induced from a left-invariant tensor on the Lie group $X$.
In particular, $M$ inherits a complex structure and holomorphic
symplectic form
\begin{equation}\label{Theta0}
(e^1+ie^2)\we(e^3+ie^4)=d\zeta_1\we d\zeta_2,
\end{equation}
where \[\textstyle \zeta_1=y+ix,\qquad\zeta_2=t-\frac12x^2+iz\] are
local complex coordinates on the base and fibres. The latter become
bilagrangian, and $\pi_{xy}$ is holomorphic. This observation
is readily generalized.

We first describe the space of invariant symplectic forms on $M$. An
element of this space lies in
\[\textstyle\ker d=\left<e^{12},e^{13},e^{14},e^{23},e^{24}\right>,\]
and so equals
\[\Om_{\la,A}=\la e^{12} + e^1\we(ae^3+be^4)-e^2\we(ce^3+de^4),\]
for some $\la\in\R$ and some real $2\times2$ matrix
$A=\hbox{\small$\left(\!\!\begin{array}{cc}a&b\\c&d\end{array}\!\!\right)$}$
with $\det A\ne0$. With this convention, \eqref{Theta0} equals
$\Om_{0,I}+i\,\Om_{0,J}$ where $I$ is the identity matrix and
$J=\hbox{\small$\left(\!\!\begin{array}{cc}0&1\\\!-1&0\end{array}\!\!\right)$}$.

\begin{lemma}\label{lagf}
The fibration $\pi_{xy}$ is Lagrangian with respect to any invariant
symplectic form on $M$.
\end{lemma}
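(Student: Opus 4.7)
The plan is to describe explicitly the space of invariant closed $2$-forms and then observe that the defining basis excludes the only component that would fail to vanish on the $\pi_{xy}$-fibres.

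First I would identify the tangent distribution to the fibres of $\pi_{xy}$. Since $\pi_{xy}$ is induced by $(x,y,z,t)\mapsto(x,y)$ and $e^1=dy$, $e^2=dx$, the vertical distribution is $\ker d\pi_{xy}=\ker e^1\cap\ker e^2$, which in terms of the dual frame is spanned by $e_3=\pd/\pd t$ and $e_4=\pd/\pd z$. So to show the fibres are Lagrangian for a $2$-form $\Omega$, it suffices to show that the $e^{34}$-coefficient of $\Omega$ is zero.

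Next I would use the first structure equation in \eqref{str}, namely $de^i=0$ for $i=1,2,3$ and $de^4=e^{12}$, to compute $d$ on each simple $2$-form $e^{ij}$. All of $e^{12},e^{13},e^{14},e^{23},e^{24}$ are closed (those involving $e^4$ because $e^4\we e^{12}=0$ owing to repeated indices after one differentiates), while
\[d(e^{34})=-e^3\we de^4=-e^{312}=-e^{123}\ne 0.\]
Hence the closed subspace $\ker d\cap\ext2\langle e^1,\dots,e^4\rangle$ is exactly the $5$-dimensional space already displayed in the excerpt, and any invariant symplectic $\Omega_{\la,A}$ has no $e^{34}$ component.

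Finally I would evaluate the general invariant symplectic form on the vertical vectors: since the expansion
\[\Omega_{\la,A}=\la e^{12}+a\,e^{13}+b\,e^{14}-c\,e^{23}-d\,e^{24}\]
contains no $e^{34}$ summand, we have $\Omega_{\la,A}(e_3,e_4)=0$, and therefore $\pi_{xy}^{-1}(\mathrm{pt})$ is Lagrangian. There is no real obstacle here — the entire argument is essentially the observation that the only basic $2$-form spanning the vertical directions is not closed on $\Nil^3\times\R$, which is a direct consequence of the Heisenberg structure equation $de^4=e^{12}$.
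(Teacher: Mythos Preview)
Your proof is correct and is essentially the same as the paper's. The paper phrases the Lagrangian condition as the vanishing of $\Omega_{\la,A}\wedge e^{12}$ (the wedge with the base volume form detects the fibre restriction), whereas you phrase it as the vanishing of the $e^{34}$-coefficient via direct evaluation on $e_3,e_4$; these are trivially equivalent, and both rest on the observation that $e^{34}$ is the only basis $2$-form not lying in $\ker d$.
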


\begin{proof} The simple 2-form $e^{12}$ represents a non-zero volume form
on the base of $\pi_{xy}$. At the level of differential forms, the
restriction of $\Om_{\la,A}$ to any fibre is therefore determined
by the wedge product $\Om_{\la,A}\we e^{12}=0$. The result
follows.
\end{proof}

More generally, consider two such forms $\Om_2,\Om_3$ with
$\Om_i=\Om_{\la_i,A_i}$. It is not difficult to arrange for
$\Th=\Om_2+i\,\Om_3$ to satisfy $\Th\we\Th=0$; this is equivalent to
asserting
\begin{equation}\label{detB}
\det(B)=1,\qquad \mathrm{tr}(B)=0,\qquad B=A_2^{-1}A_3.
\end{equation}
When conditions \eqref{detB} hold, we can write
$\Th=\alpha_1\we\alpha_2$ over $\C$, whence
$\Lambda^{1,0}=\left<\alpha_1,\,\alpha_2\right>$ is the space of
$(1,0)$-forms relative to some invariant complex structure $J_1$ on
$M$. Observe that
\[ 0=d\Th=d\alpha_1\we\alpha_2-\alpha_1\we d\alpha_2,\]
which proves that $(d\alpha_i)^{0,2}=0$ for $i=1,2$. The integrability
of $J_1$ then follows from the Newlander--Nirenberg theorem.

Since $\Th\we e^{12}=0$, the real 2-form $e^{12}$ has type $(1,0)$
relative to $J_1$. It follows that we can choose $p,q\in\C$ so that
$\alpha_1=pe^1+qe^2$, defining a complex structure on the base
$\bT_{xy}$ relative to which $\pi_{xy}$ is holomorphic. The fibres of
$\pi_{xy}$ themselves become holomorphic curves in $(M,J_1)$. We
complete these observations with

\begin{prop}
Given an invariant \aK structure $(g,J_2,\Om_2)$ on
$M$, we can choose a holomorphic symplectic structure
$\Th=\Om_2+i\,\Om_3$ and associated complex structure $J_1$
for which $\pi_{xy}$ is holomorphic. Conversely, any invariant complex
structure on $M$ arises in this way.
\end{prop}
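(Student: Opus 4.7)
My plan splits into the two parts, both handled by linear algebra on the $5$-parameter family $\Om_{\la,A}$ of invariant symplectic forms introduced just before Lemma \ref{lagf}. The key algebraic identities
\[\Om_{\la,A}^2=2\det(A)\,e^{1234},\qquad\Om_{\la,A}\we\Om_{\mu,B}=\mathrm{tr}(A\,\mathrm{adj}\,B)\,e^{1234},\]
which I would derive once from the structure equations $de^i=0$ ($i=1,2,3$) and $de^4=e^{12}$, translate the conditions \eqref{detB} into $\Om_2^2=\Om_3^2$ and $\Om_2\we\Om_3=0$.

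For the first assertion, write the given $\Om_2=\Om_{\la_2,A_2}$ with $\det A_2\ne0$ and set $\Om_3:=\Om_{\la_3,A_2J_0}$ for any $\la_3\in\R$, where $J_0=\hbox{\small$\left(\!\!\begin{array}{cc}0&1\\-1&0\end{array}\!\!\right)$}$. Then $B=A_2^{-1}A_3=J_0$ has $\det B=1$ and $\mathrm{tr}\,B=0$, so $\Th=\Om_2+i\,\Om_3$ satisfies \eqref{detB}; the integrability of the associated $J_1$ and the $J_1$-holomorphicity of $\pi_{xy}$ are then supplied word-for-word by the two paragraphs immediately preceding the statement.

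For the converse, given an invariant integrable $J_1$, I would produce a closed invariant $(2,0)$-form $\Th$. Since $e^4$ is the only non-closed generator, integrability is equivalent to $(e^{12})^{0,2}=(e^1)^{0,1}\we(e^2)^{0,1}=0$; the form $e^1$ is real and nonzero, so $(e^1)^{0,1}\ne0$, and solving $(e^2)^{0,1}=\mu\,(e^1)^{0,1}$ yields a nonzero element $\alpha_1:=e^2-\mu\,e^1\in\La^{1,0}\cap\mathrm{span}_\C(e^1,e^2)$. Completing to a basis $\{\alpha_1,\alpha_2\}$ of $\La^{1,0}$ and setting $\Th:=\alpha_1\we\alpha_2$, one computes $d\alpha_1=0$ (since $\alpha_1$ has no $e^4$-component) and $\alpha_1\we e^{12}=0$ (since $\alpha_1\in\mathrm{span}(e^1,e^2)$), whence $d\Th=0$. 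Because $\{\alpha_1,\alpha_2\}$ is a basis of $\La^{1,0}$, $\Th$ generates $\La^{2,0}$; together with $\Th^2=0$ this gives $\Om_2^2=\tfrac12\Th\we\bar\Th\ne0$ for $\Om_2:=\mathrm{Re}\,\Th$, which is therefore symplectic. An invariant compatible pair $(g,J_2)$ is then manufactured from $\Om_2$ by the standard polar-decomposition procedure applied to any invariant background metric, and by construction the complex structure associated to $\Th=\Om_2+i\,\Om_3$ is the given $J_1$. I expect the reality argument locating $\alpha_1$ inside $\mathrm{span}_\C(e^1,e^2)$ to be the only genuine subtlety; everything else is bookkeeping with the structure equations.
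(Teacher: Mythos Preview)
Your argument is correct in both directions, but it differs from the paper's in interesting ways.

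For the forward direction, the paper does not write down $\Om_3$ explicitly. Instead it invokes the metric $g$: since $\Om_2\in\La^+$ and $\ker d$ is $5$-dimensional while $\La^+$ is $3$-dimensional, the intersection $\La^+\cap\ker d$ has dimension at least $2$, so there is a closed self-dual $\Om_3$ orthogonal to $\Om_2$; one then normalises. Your construction via $A_3=A_2J_0$ is more concrete and entirely metric-free, and your algebraic identities $\Om_{\la,A}^2=2\det(A)\,e^{1234}$ and $\Om_{\la,A}\we\Om_{\mu,B}=\mathrm{tr}(A\,\mathrm{adj}\,B)\,e^{1234}$ make the verification of \eqref{detB} transparent. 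The trade-off is that the paper's $\Om_3$ lands in $\La^+$ relative to the given $g$, which feeds directly into the orthonormal set-up \eqref{selfdual}--\eqref{conditions} used in Theorem~\ref{KT}; your $\Om_3$ need not be self-dual for the given $g$, so you would have to adjust the metric afterwards (which the statement allows, but the paper's choice is better aligned with the subsequent development).

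For the converse, the paper simply cites \cite[Theorem~1.3]{Sal} to obtain a basis $\alpha_1,\alpha_2$ of $\La^{1,0}$ with $d\alpha_1=0$ and $d\alpha_2$ divisible by $\alpha_1$. You instead reprove this in the present $4$-dimensional case: from $de^4=e^{12}$ you deduce that integrability is equivalent to $(e^{12})^{0,2}=0$, and then extract $\alpha_1\in\La^{1,0}\cap\langle e^1,e^2\rangle_\C$ by the proportionality argument. This is a genuine, self-contained alternative to the citation, and the ``reality'' step you flag (showing $(e^1)^{0,1}\ne0$, hence $(e^2)^{0,1}$ is a multiple of it) is exactly the point that makes it work. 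Both routes then finish the same way, by taking $\Om_2=\mathrm{Re}\,\Th$ and picking a compatible metric.
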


\begin{proof}
By assumption, $g$ is a scalar product for which
\[ g(J_2X,J_2Y)=g(X,Y),\qquad g(J_2X,Y)=\Om_2(X,Y).\]
Orient $M$ by means of $J_2$, so that the 3-dimensional space $\La^+$
of self-dual 2-forms relative to $g$ contains the closed 2-form
$\Om_2$. Since $\ker d$ has dimension 5 on the space of 2-forms, we
can find a non-zero closed 2-form $\Om_3$ in the orthogonal
complement of $\Om_2$ in $\La^+$. The orthogonality means that
$\Om_2\we\Om_3=0$ and we can normalize $\Om_3$ so that
$\Om_2\we\Om_2=\Om_3\we\Om_3$. It follows that
$\Th=\Om_2+i\,\Om_3$ satisfies $\Th\we\Th=0$ as in
\eqref{detB}, and $\pi_{xy}$ is holomorphic.

Conversely, suppose that $J$ is an arbitrary invariant complex
structure. Since $J$ is integrable, its space $\La^{1,0}$ of
$(1,0)$-forms is generated by invariant 1-forms $\alpha_1,\alpha_2$
for which $d\alpha_1=0$ and $d\alpha_2$ is divisible by $\alpha_1$, by
\cite[Theorem 1.3]{Sal}. It follows that the invariant 2-form
$\alpha_1\we\alpha_2\in \Lambda^{2,0}$ is closed and, after fixing
a compatible metric, we have recovered the initial hypothesis.
\end{proof}

\begin{rem} \rm The discussion above shows that $M$ possesses a
  non-trivial moduli space of holomorphic symplectic structures $\Th$.
  Specifying the invariant complex structure $J_1$ is equivalent to
  specifying the projective class $\left<\Th\right>$, and $J_1$ in
  turn determines a complex structure on both the base $\bT_{xy}^2$
  and a fibre $T^2$ that become elliptic curves. Taken together, these
  two complex structures can be specified by a pair of invariants
  $(j_1,j_2)\in\C^2$. A third parameter is required to `scale' base
  and fibre, but can be eliminated by quotienting by the action of
  right translation by $\Nil^3$ commuting with $\Gamma$, cf.\
  \cite{KS,Poon}.
\end{rem}

Given an invariant \aK structure, extended as in the
  proposition, it is possible to set $\Th=\alpha_1\we\alpha_2$,
  where
\[\textstyle \alpha_1 = f^1 + if^2\in\left<e^1,e^2\right>_\C,
\qquad\alpha_2= f^3+if^4,\]
the $f^i$ have equal norms relative to $g$, and $df^i=0$ for
$i=1,2,3$. (To achieve the latter, multiply $\alpha_2$ by a unit complex
number so that $f^3$ is a linear combination of $e^1,e^2,e^3$.) We can
now write $df^4=k f^{12}$ with $k\ne0$. Apart from this last factor,
we have recovered the structure equation \eqref{str}(1). In
particular, any two invariant holomorphic symplectic structures are
equivalent by a Lie algebra automorphism.

With the assumptions of the last paragraph, the space $\Lambda^+$ of
self-dual 2-forms (see \eqref{ext2}) is generated by
\begin{equation}\label{selfdual}
\Om_1=f^{12}+f^{34},\qquad\Om_2=f^{13}+f^{42},\qquad
\Om_3=f^{14}+f^{23}.
\end{equation}
The space $\La^+$ determines the conformal (or rather, in an invariant
context, the homothety) class of $g=\sum f^i\otimes f^i$. Its
annihilator 
$\La^- = \langle \Om_1,\Om_2,\Om_3\rangle^\circ$
(relative to wedge product) is the space of anti-self-dual $2$-forms.

We shall in fact fix the homothety class of $g$ by requiring that
\begin{equation}\label{Theta}
\Th=(f^1+if^2)\we(f^3+if^4)=\Om_2+i\,\Om_3
\end{equation}
have the same norm as \eqref{Theta0}. The forms \eqref{selfdual} are
therefore characterized by the conditions
\begin{equation}\label{conditions}
\left\{
\begin{array}{l}
\Om_i\we\Om_j=0,\quad i\ne j;\\[5pt]
\Om_i\we\Om_i=2e^{1234},\quad i=1,2,3;\\[5pt]
d\Om_i=0,\quad i=2,3.
\end{array}
\right.
\end{equation}
Setting $\Om_i(v,w)=g(J_iv,w)$ extends the definition on $J_1$ into a
triple $J_1,J_2,J_3$ of \acs s.

The main result of this section is

\begin{theorem}\label{KT} Let $F\in C^\infty(\bT)$
  be a smooth function on the base $\bT=\bT_{xy}^2$ such that
\begin{equation}\label{second}
   \int_\bT(\eF-1)=0.
\end{equation}
Fix a triple of $2$-forms \eqref{selfdual} satisfying
\eqref{conditions}, and set $\Th=\Om_2+i\Om_3$. Then there exists a
closed complex-valued $2$-form $\tTh$ on $M$ such that
\begin{itemize}
\vskip2pt\item \ $\tTh\we\tTh=0$ and
$\tTh\we\overline{\tTh}=4\eF e^{1234}$;
\vskip2pt\item \ $\tTh=h\,\Th+\Phi_-$ where $h\in C^\infty(\bT)$
and $\Phi_-\colon\bT\to(\La^2_-)_\C$;
\vskip2pt\item \ $[\tTh]=[\Th]$ in $H^2(M,\C)$.
\end{itemize}
\end{theorem}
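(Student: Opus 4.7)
The plan is to parametrize $\tTh$ by a single real potential $p \in C^\infty(\bT)$ and reduce the four stated conditions to one scalar Monge--Amp\`ere equation on the base. Noting that $\Th = d\zeta_1 \wedge d\zeta_2$ factors as a wedge of $1$-forms, so that any form of the shape $\alpha \wedge d\zeta_2$ automatically squares to zero, I take the ansatz
\[
\tTh := (d\zeta_1 + d\phi) \wedge d\zeta_2,\qquad \phi := p_y + ip_x.
\]
Closedness follows immediately: $d(d\zeta_1 + d\phi) = 0$, while $d\zeta_1 + d\phi$ lies in $\langle e^1, e^2\rangle$ and annihilates the obstruction $d(d\zeta_2) = ie^{12}$ on wedging.

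Since $\phi$ is basic, $d\phi = \phi_\zeta\, d\zeta_1 + \phi_{\bar\zeta}\, d\bar\zeta_1$, and the ansatz expands as
\[
\tTh \,=\, (1+\phi_\zeta)\Th + \phi_{\bar\zeta}\, d\bar\zeta_1\wedge d\zeta_2 \,=\, h\,\Th + \Phi_-,
\]
with $h := 1 + \tfrac12\Delta p$ a real function on $\bT$ and $\Phi_- := \phi_{\bar\zeta}\,d\bar\zeta_1\wedge d\zeta_2$ a basic section of $(\La^2_-)_\C$ (since $d\bar\zeta_1\wedge d\zeta_2$ is an invariant anti-self-dual form). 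The cohomology identity $[\tTh] = [\Th]$ then follows by a Nomizu-type averaging: the closed form $\tTh - \Th = d\phi \wedge d\zeta_2$ has coefficients $\phi_\zeta = \tfrac12\Delta p$ and $\phi_{\bar\zeta}$ whose means on $\bT$ vanish by periodicity of $p$, so its invariant representative is zero and hence $\tTh - \Th$ is exact.

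A direct calculation using $\Th\wedge\bar\Th = 4e^{1234}$ and the anti-self-duality of $d\bar\zeta_1\wedge d\zeta_2$ gives
\[
\tTh \wedge \bar\tTh \,=\, 4\bigl(h^2 - |\phi_{\bar\zeta}|^2\bigr)\,e^{1234}.
\]
Substituting $h = 1 + \tfrac12(p_{xx}+p_{yy})$ and $\phi_{\bar\zeta} = \tfrac12(p_{yy}-p_{xx}) + ip_{xy}$ and using $(A+B)^2 - (A-B)^2 = 4AB$ yields the algebraic identity $h^2 - |\phi_{\bar\zeta}|^2 = (1+p_{xx})(1+p_{yy}) - p_{xy}^2 = \det(I+D^2 p)$. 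Hence the volume condition $\tTh\wedge\bar\tTh = 4\eF e^{1234}$ is equivalent to the standard real Monge--Amp\`ere equation
\[
\det(I + D^2 p) = \eF
\]
on $\bT^2$. Under the integrability hypothesis \eqref{second}, this equation admits a smooth solution (unique up to an additive constant) by the theorem of the second author \cite{Li}.

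The main obstacle is discovering the right ansatz. A general complex $\phi$ would lead to the quasilinear equation $|1+\phi_\zeta|^2 - |\phi_{\bar\zeta}|^2 = \eF$ whose real and imaginary parts interact awkwardly, with no obvious reduction to a classical PDE; the special choice $\phi = p_y + ip_x$ is precisely what forces $h$ to be real and collapses the volume equation into the classical scalar Monge--Amp\`ere $\det(I+D^2 p) = \eF$ on the $2$-torus. Once this ansatz is in hand, closedness, cohomology, and the algebraic identity are all short verifications, and the analytic content is delegated to Li's existence result.
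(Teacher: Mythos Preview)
Your approach is essentially the paper's own: the ansatz $(d\zeta_1+d\phi)\wedge d\zeta_2$ with $\phi=p_y+ip_x$ is exactly the paper's $\tTh=(dP_x+i\,dP_y)\wedge(f^3+if^4)$ with $P=\tfrac12(x^2+y^2)+p$, rewritten in complex coordinates, and both reduce the volume constraint to $\det(I+D^2p)=\eF$ solved by Li's theorem. Two minor differences: the paper exhibits the primitive $\alpha=-kp(f^1+if^2)+(p_x+ip_y)(f^3+if^4)$ explicitly rather than invoking Nomizu averaging (your averaging argument is valid, but the primitive $\phi\,d\zeta_2+p\,d\zeta_1$ is just as easy to write down); and, more importantly, you have only treated the standard case $\Th=d\zeta_1\wedge d\zeta_2$, whereas the theorem is stated for an arbitrary triple $(f^i)$ satisfying \eqref{conditions}---the paper fills this gap with a linear change $e^i=\sum_jA_{ij}f^j$ on the base, turning the equation into $\det(S+D^2p)=\eF/(\det A)^2$ with $S=(AA^{\!\top})^{-1}$, still covered by Li's result.
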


\begin{proof} For simplicity, we first suppose that $f^1=e^1=dx$ and
  $f^2=e^2=dy$, which is the case when $\Th$ has the standard form
  \eqref{Theta0}. Consider the 2-dimensional Monge--Amp\`ere
  equation
  \begin{equation}\label{third}
 P_{xx}P_{yy} - P_{xy}^2 = \eF,
\end{equation}
where
\begin{equation}\label{Pp}
\textstyle P(x,y)=\frac12(x^2+y^2)+p(x,y),
\end{equation}
with $p\in C^\infty(\bT)$.  This equation can be re-written
\begin{equation}\label{dPu}
dP_x\we dP_y=\eF\,f^{12}.
\end{equation}
On the other hand, substituting \eqref{Pp}, 
$\int_\bT dP_x\we dP_y=\int_\bT f^{12}$, since $p$ and its derivatives integrate to
zero. Thus, \eqref{second} is a necessary condition for a solution to
\eqref{dPu}.

We define
\begin{equation}\label{tT}
\tTh = (dP_x+i\,dP_y)\we(f^3+if^4),
\end{equation}
in analogy to \eqref{Theta}. The first conditions
\[\begin{array}{c}
\tTh\we\tTh = 0,\\[5pt]
\tTh\we\overline{\tTh}=4\,dP_x\we dP_y\we f^{34} = 4\eF\,e^{1234},
\end{array}
\]
are then immediate consequences of \eqref{tT} and \eqref{dPu}.

Recalling that $\Om_1=f^{12}+f^{34}$, it is equally obvious that
$\tTh\we\Om_1=0$. We also have $\tTh\we\Th=0$, and, with a bit more
work,
\[\tTh\we\overline\Th=2(P_{xx}+P_{yy}).\]
The second bullet point follows by setting
$h=\frac12(P_{xx}+P_{yy})$.

Finally, we may write
\begin{equation}\label{dalpha}
\begin{array}{rcl} \tTh - \Th
&=& (dp_x+idp_y)\we(f^3+if^4)\\[5pt]
&=& d\left[(p_x+ip_y)\we(f^3+if^4)\right]
      -(p_x+ip_y)\we ikf^{12}\\[5pt]
&=& d\alpha,
\end{array}
\end{equation}
where
\[\alpha = -kp(f^1+if^2) + (p_x+ip_y)(f^3+if^4)\]
is a well-defined 1-form on $\bT$. Note that we have used the
assumption $df^4=kf^{12}$ reflecting the arbitrary invariant metric at
the start.\smallbreak

To know that we can solve \eqref{third}, it suffices to take $n=2$,
$S_{ij}=\delta_{ij}$ and $G=F$ in the following theorem, taken from
\cite{Li}.\smallbreak

{\it Let $\bT^n=\R^n/\Z^n$ be a torus with flat coordinates
  $x_i$. Suppose that $F\in C^{\infty}(\bT^n)$ is a positive function,
  and $(S_{ij})$ a symmetric positive-definite $n\times n$ matrix satisfying
\begin{equation}\label{CF}
\int_{\bT^n}\!\big(F-\det(S_{ij})\big) = 0.
\end{equation}
Consider the equation
\begin{equation}\label{detC}
 \det\left(S_{ij} + \frac{\pd^2 p}{\pd x_i\pd x_j}\right) 
= \eE^G \qquad\hbox{on}\quad\bT^n,
\end{equation}
together with the positive-definite condition
\begin{equation}\label{epd}
\left(\!S_{ij} + \frac{\pd^2 p}{\pd x_i\pd x_j}\right) > 0
\qquad\hbox{on}\quad\bT^n.
\end{equation}
Then there exists $p\in C^{\infty}(\bT^n)$ satisfying \eqref{detC} and
\eqref{epd}, and $p$ is unique up to addition of a constant. Moreover,
\eqref{CF} is necessary for the solvability of
\eqref{detC}.}\smallbreak

In general, there will be a constant matrix $A=(A_{ij})$ such that
$e^i = \sum_{i=1}^2 A_{ij}f^j$. We therefore make a linear substitution
of coordinates
\[ x=A_{11}u+A_{12}v,\quad y=A_{21}u+A_{22}v,\]
so that \[ f^1=du,\qquad f^2=dv.\] 
This allows us to proceed as before; the differential
equations to be solved are the same except that
all derivatives with respect to $x,y$ are replaced by those with 
respect to $u,v$. Since
\begin{equation}\label{BA} H = 
\left(\!\begin{array}{cc}
P\sb{uu}&P\sb{uv}\\[3pt] P\sb{uv}&P\sb{vv}\end{array}\!\right) 
= I + A\tp\,(P\sb{x_ix_j})A,
\end{equation}
the Monge--Amp\`ere equation $\det H=\eF$ in the new coordinates takes
the form \eqref{detC} with
\[ (S_{ij})=(AA\tp)^{-1},\qquad f=\eF/(\det A)^2.\] 
The theorem then implies the existence
$p\in \C^{\infty}(\bT^2)$ solving \eqref{second}. \end{proof}

Taking the real part of the second bullet point yields
\[ \widetilde\Om_2=h\Om_2+\Phi_2,\]
where $\Phi_2$ is a $(1,1)$-form relative to all of $J_1,J_2,J_3$.
In particular,
\[ d(\mathrm{Re}\,\alpha) =
\widetilde\Om_2-\Om_2=(h-1)\Om_2+\Phi_2\] is a $(1,1)$-form
relative to $J_2$. Moreover, the new structure has the assigned volume
form
\[ \widetilde\Om_2\we\widetilde\Om_2
= \eF \Om_2\we\Om_2.\]
In conclusion,

\begin{cor} \label{cor45} Given an invariant \aK structure
  $(g,J_2,\Om_2)$ on $M$, the associated Calabi--Yau problem admits a
  unique solution.
\end{cor}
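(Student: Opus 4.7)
The plan is to obtain the corollary directly as the real part of the conclusion of Theorem~\ref{KT}, after extending the given almost-K\"ahler datum to a holomorphic symplectic one via the preceding proposition. Write the assigned volume form as $\sigma=\eF\Om_2^2$ with $F\in C^\infty(\bT)$. Integrating \eqref{CY1} and using $\Om_2^2=2\,e^{1234}$ (the second line of \eqref{conditions}) converts the Calabi--Yau normalisation into the hypothesis \eqref{second} of Theorem~\ref{KT}. First I would apply the preceding proposition to produce $J_1$, $\Om_3$, and the complete triple $\Om_1,\Om_2,\Om_3$ of self-dual forms satisfying \eqref{conditions}, with $\Th=\Om_2+i\Om_3$. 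Feeding these data into Theorem~\ref{KT} yields a closed complex 2-form $\tTh=h\,\Th+\Phi_-$, with real-valued $h\in C^\infty(\bT)$ and $\Phi_-$ taking values in $(\La^2_-)_\C$.

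Next I would set $\widetilde\Om_2=\mathrm{Re}\,\tTh=h\,\Om_2+\Phi_2$ with $\Phi_2=\mathrm{Re}\,\Phi_-$, and verify the three requirements of the Calabi--Yau problem on $(M,J_2,\Om_2)$. Closedness of $\widetilde\Om_2$ is immediate from $d\tTh=0$. The identity \eqref{dalpha} in the proof of Theorem~\ref{KT} gives $\tTh-\Th=d\alpha$, so $\widetilde\Om_2-\Om_2=d(\mathrm{Re}\,\alpha)$ is exact and \eqref{CY3} holds. Because real anti-self-dual 2-forms on an oriented Riemannian 4-manifold lie in $\La^{1,1}$ with respect to every orthogonal \acs\ compatible with the metric and orientation, $\Phi_2$ is a primitive $(1,1)$-form relative to $J_2$, so $\widetilde\Om_2$ is of type $(1,1)$ relative to $J_2$. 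For \eqref{CY2}, expanding $\tTh=\widetilde\Om_2+i\widetilde\Om_3$ in the identities $\tTh\we\tTh=0$ and $\tTh\we\overline{\tTh}=4\eF\,e^{1234}$ gives $\widetilde\Om_2^{\,2}=\widetilde\Om_3^{\,2}$ together with $\widetilde\Om_2^{\,2}+\widetilde\Om_3^{\,2}=4\eF\,e^{1234}$, whence $\widetilde\Om_2^{\,2}=2\eF\,e^{1234}=\eF\Om_2^2$.

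The one non-routine point is to confirm that $\widetilde\Om_2$ is positive-definite as a $(1,1)$-form, so that it actually defines an almost-K\"ahler structure. In the normalised model used in the proof of Theorem~\ref{KT} this is direct: the Hessian $H=(P_{x_ix_j})$ is positive-definite by \eqref{epd}, so $h=\tfrac12\mathrm{tr}\,H>0$ and $\det H=\eF>0$ place $\widetilde\Om_2$ in the positive cone at each point. For a general invariant $(g,J_2,\Om_2)$ the substitution \eqref{BA} transports this positivity through the constant linear change of frame. Uniqueness of the solution is then Proposition~\ref{!}. The positivity check is the step I would treat most carefully; everything else amounts to mechanical real-part manipulations of the output of Theorem~\ref{KT}.
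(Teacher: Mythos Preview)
Your proposal is correct and follows essentially the same route as the paper: the corollary is derived immediately after Theorem~\ref{KT} by taking the real part $\widetilde\Om_2=h\Om_2+\Phi_2$, noting that the anti-self-dual piece $\Phi_2$ is $(1,1)$ for $J_2$, and reading off the volume identity from the two bullet-point equations for $\tTh$. You are slightly more explicit than the paper about the positivity check (which the paper leaves implicit in the Monge--Amp\`ere solution) and about how $\widetilde\Om_2^{\,2}=\eF\Om_2^{\,2}$ follows from $\tTh\we\tTh=0$ together with $\tTh\we\overline{\tTh}=4\eF e^{1234}$, but the argument is the same.
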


\begin{rem}\label{!!}\rm The uniqueness statement for solutions of
Monge--Amp\`ere on the 2-torus base will translate into a uniqueness
result for the 4-dimensional problem. The only gap in our presentation
is the proof that the problem on $M$ necessarily gives rise to a
solution of \eqref{third}. However, uniqueness is already covered by
Proposition~\ref{!}, whose proof translates directly into the
following neat argument for \eqref{third}.

Taking \[2h=P\sb{uu}+P\sb{vv} = 4 + \Delta p,\] to be positive, the
triple $(P\sb{uu},P\sb{vv},P\sb{uv})$ defines (at each point of $\bT$)
a vector inside the forward light cone in $\R^{1,2}$ with a Lorentzian
norm equal to $\eF$. If $P,\widetilde P$ are two solutions, set
\[ P'=P-\widetilde P,\qquad P''=P+\widetilde P.\]
Then $(P'\sb{uu},P'\sb{vv},P'\sb{uv})$ (having zero Lorentzian product
with $(P''\sb{uu},P''\sb{vv},P''\sb{uv})$) is everywhere spacelike or
zero, so $dP'_u\we dP'_v$ is a non-negative multiple of $e^{12}$. But
$P'$ is biperiodic, so the integral of this 2-form over $\bT^2$
vanishes and $P'$ must be constant.
\end{rem}

\subsection{The potential}
To further understand the proof of Theorem~\ref{KT}, note that $\tTh$,
by its very definition \eqref{tT}, is holomorphic relative to the
complex structure with local coordinate $P_u+iP_v$ on the base and
equal to $J_1$ on the fibres. In this sense, $P$ is a potential
function for the modified complex structure, and the assigned volume
form on $\bT^2$ then determines the \emph{Hessian metric}
\begin{equation}\label{ds2}
ds^2=P\sb{uu}du^2+2P\sb{uv}du\kern1pt dv+P\sb{vv}dv^2,
\end{equation}
represented by the matrix $H$ of \eqref{BA}. This fundamental form is
then duplicated on the fibres of $M$. Such metrics are well known in
work on special Lagrangian and toric geometry in a K\"ahler setting
\cite{Hitchin,AS}.

The potential $P$ does not feature in \cite{TW}, but is an integral of
functions considered there. Nonetheless, the Tosatti--Weinkove
estimate
\begin{equation}\label{ExtimateTosWein}
\tilde\Delta h\ \ge\ \inf_{\bT^2}\Delta F
\end{equation}
makes sense in the above context, where $\tilde\Delta$ is the
Laplacian relative to \eqref{ds2}, and their proof of it translates
into the following. Take derivatives of \eqref{third}, and rearrange
them to give an equation of the form
\[\Delta F = \Delta_\mathrm{even}-\Delta_\mathrm{odd},\] where
$\Delta_\mathrm{even}$ (respectively $\Delta_\mathrm{odd}$) involves
products of partial derivatives of even (respectively odd) total
order. Moreover,
\[\begin{array}{l}
\Delta_\mathrm{even} =
\hbox{tr}\left(H^{-1}\mathrm{Hess}\,h\right) \tilde\Delta h,\\[5pt]
\Delta_\mathrm{odd} = \wp(H^{-1}H_u)+\wp(H^{-1}H_v),
\end{array}\]
$\wp(B)=(\hbox{tr}\,B)^2-2\det B\ge0$ for $B$ a symmetric matrix.

\subsection{Other manifolds belonging to the family (1\1).}
All the computations we have performed in this section on the
Kodaira--Thurston manifold can be used for every manifold
$M=\Gamma\backslash (Nil^3\times \R)$ belonging to class (1\1). Every
$\Gamma$ is a lattice of $Nil^3\times \R$, and the quotient $M$ is a
genuine nilmanifold.

\section{Manifolds modelled on $\Nil^4$}\label{3-step}

Let $M$ be the total space of a torus bundle belonging to case (2) of
Section~\ref{classification}, so that $M$ is a nilmanifold associated
to the $3$-step nilpotent Lie group \eqref{consist}. The third row of
\eqref{mappings} yields a projection
\begin{equation}\label{pizt}
\pi_{zt}\colon M\to \bT^2_{zt}
\end{equation}
with $T^2$ fibres. We adopt the following basis of left-invariant
vector fields:
\[\textstyle
e_1=-\pd_t,\qquad
e_2=\pd_y + t\pd_x,\qquad
e_3=\pd_z + t\pd_y + \frac12 t^2\pd_x, \qquad
e_4=\pd_x,
\]
and dual basis of 1-forms:
\begin{equation}\label{-tyz}\textstyle
e^1=-dt,\qquad
e^2=dy - t\kern1pt dz,\qquad
e^3=dz,\qquad
e^4=dx-t\,dy+\frac12t^2dz.
\end{equation}
With this convention, $(e^i)$ satisfies \eqref{3st}, which coincides
with setting $\la=1$ in \eqref{def} in the next lemma.

The analogue of Lemma~\ref{lagf} is valid for $\pi_{zt}$. This is
because any symplectic form $\Om$ belongs to
\[\textstyle \ker d=\left<e^{12},e^{13},e^{14},e^{23}\right>.\]
This time it is $e^{13}=dz\we dt$ that represents a volume form on
the base, and
\begin{equation}\label{Om13}
\Om\we e^{13}=0.
\end{equation}
Of course, it remains true that $\Om\we e^{12}=0$.

The canonical choice for a positively-oriented symplectic form is
$e^{14}+e^{23}$, and the next result shows that the general case is
not so very different. We state it in a form that will also serve in
Section~\ref{2fib}.

\begin{lemma}\label{filter}
  Let $(g,J,\Om)$ be an invariant \aK structure on a $4$-dimensional
  nilmanifold that satisfies
\begin{equation}\label{def}
de^1=0,\qquad de^2=\la e^{13},\qquad de^3=0,\qquad de^4=e^{12},
\end{equation}
with $\la\in\R$. Assume \eqref{Om13} (which is an extra hypothesis
only if $\la=0$). There exists an orthonormal basis $(f^i)$ for which
\begin{equation}\label{Omega3}
\Om=f^{14}+f^{23},
\end{equation}
and
\[\textstyle f^1\in\left<e^1\right>,\qquad f^2\in\left<e^1,e^3\right>,
\qquad f^3\in\left<e^1,e^3,e^2\right>.\]
\end{lemma}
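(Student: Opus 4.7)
The first step is to pin down the form of $\Om$ in the coframe. Applying $d$ to each $e^{ij}$ using \eqref{def} reveals that closed invariant $2$-forms are spanned by $e^{12},e^{13},e^{14},e^{23}$ when $\la\ne0$ (in which case $\Om\we e^{13}=0$ is automatic), while for $\la=0$ the form $e^{24}$ is also closed and the extra hypothesis eliminates an $e^{24}$-summand. In either case $\Om=ae^{12}+be^{13}+ce^{14}+de^{23}$, and non-degeneracy $\Om\we\Om=2cd\,e^{1234}\ne0$ gives $cd\ne0$.

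I then apply Gram--Schmidt, using the metric on $T^*M$ induced by $g$, to the flag $V_1=\left<e^1\right>\subset V_2=\left<e^1,e^3\right>\subset V_3=\left<e^1,e^3,e^2\right>$. This yields an orthonormal basis $(f^i)$ with $f^i\in V_i$ for $i\le3$, uniquely determined up to independent sign changes on each $f^i$. Dually one has $f_4\in\left<e_4\right>$, $f_3\in\left<e_2,e_4\right>$, $f_2\in\left<e_2,e_3,e_4\right>$, and $f_1$ is a multiple of $g^{-1}(e^1)$.

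The crux is to verify $\Om=f^{14}+f^{23}$ in this basis. Two pairings vanish immediately: since $\Om(e_i,e_4)=0$ for $i=2,3$ and $f_4$ lies along $e_4$, one reads off $\Om(f_2,f_4)=\Om(f_3,f_4)=0$. For the remaining pairings I invoke the \aK compatibility, conveniently rewritten as the matrix identity $\Om\,g^{-1}\Om=-g$. From $\iota_{e_4}\Om=-ce^1$ and $\Om(X,Y)=g(JX,Y)$ I get $Je_4=-c\,g^{-1}(e^1)\in\left<f_1\right>$, so $\left<f_1,f_4\right>$ is a $J$-invariant $2$-plane; its $g$-orthogonal complement $\left<f_2,f_3\right>$ is therefore also $J$-invariant, and the identity $g(JX,X)=\Om(X,X)=0$ forces $Jf_2\in\left<f_3\right>$. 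Together these give $\Om(f_1,f_2)=\Om(f_1,f_3)=0$. The $(4,4)$ entry of $\Om\,g^{-1}\Om=-g$ evaluates to $c^2g^{11}=g_{44}$, which combined with $f_1=g^{-1}(e^1)/\sqrt{g^{11}}$ and $f_4=e_4/\sqrt{g_{44}}$ gives $|\Om(f_1,f_4)|=1$; an analogous calculation on the other block yields $|\Om(f_2,f_3)|=1$.

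What remains is sign management: we have $\Om=\varepsilon_1f^{14}+\varepsilon_2f^{23}$ with $\varepsilon_j=\pm1$, and independent sign flips of $f^1$ and $f^2$ absorb both ambiguities without disturbing the filtration. The step I expect to require the most care is the $J$-invariance argument together with the matching of normalizations; both hinge on the \aK compatibility $\Om\,g^{-1}\Om=-g$ rather than on the flag alone, which is where the hypothesis really bites.
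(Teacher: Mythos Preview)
Your argument is correct, but it proceeds quite differently from the paper's. The paper starts from the other end: it first picks \emph{any} orthonormal coframe $(f^i)$ with $\Om=f^{14}+f^{23}$, then uses the residual $U(2)$-freedom (the stabilizer of $\Om$ in $SO(4)$) to rotate $f^1$ into $\langle e^1\rangle$, and the remaining $U(1)$ rotation in the $f^2f^3$-plane to place $f^2$ in $\langle e^1,e^3\rangle$ (this last step uses $\Om\wedge e^{13}=0$ in the form $f^{23}\wedge e^{13}=0$). The final containment $f^3\in\langle e^1,e^3,e^2\rangle$ is then read off from $d(f^{23})=0$ and the structure equations. By contrast, you fix the flag first via Gram--Schmidt and then invoke the compatibility $\Om\,g^{-1}\Om=-g$ (equivalently, $J$-invariance of the $\langle f_1,f_4\rangle$ plane) to force $\Om$ into canonical shape. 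Your route front-loads the closedness of $\Om$ into the coefficient computation $\Om=ae^{12}+be^{13}+ce^{14}+de^{23}$ and never needs $d(f^{23})=0$ explicitly, whereas the paper's route hides the metric compatibility inside the $U(2)$-action and uses closedness at the very end. Both are clean; the paper's is a bit shorter, yours is more constructive and makes the role of the almost-K\"ahler condition more visible.
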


\smallbreak\noindent \emph{NB.} The inherent filtration would be more
memorable had we interchanged $e^2\leftrightarrow e^3$, but this would
have caused bigger notational difficulties elsewhere.

\begin{proof}
We can certainly find an orthonormal basis $(f^i)$ of 1-forms for
which \eqref{Omega3} is valid, whilst retaining the freedom to act by
the stabilizer $U(2)$. Using the latter, we firstly ensure that
$f^1=ke^1$ for some $k\ne0$, leaving freedom to rotate in the $f^{23}$
plane. Any 2-form in the image of $d$ is divisible by $e^1$, so
$d(f^{14})=-f^1\we df^4=0$.

Since \eqref{Om13} holds in all cases, $f^{23}\we e^{13}=0$.  This
means that we can choose $f^2$ to be a linear combination of
$e^1,e^3$. We now have
\[0 = d(f^{23}) = -f^2\we df^3 = e^{13}\we\sigma,\]
for some 1-form $\sigma\in\left<e^2,\la e^3\right>$. But this forces
$\sigma\in\left<\la e^3\right>$ and $f^3$ cannot have a component in
$e^4$.
\end{proof}

The ingredients are now in place to prove another special case of
Theorem~\ref{main}.

\begin{theorem}\label{3stepthm}
Let $M$ be a discrete quotient of $\Nil^4$ admitting a fibration
\eqref{pizt} over $\bT=\bT^2_{zt}$. Given any invariant \aK structure
$(g,J,\Om)$ on $M$, and a volume form $\sigma=\eF\,\Om^2$ with $F\in
C^\infty(\bT)$ such that $\int_\bT(\eF-1)=0$, the problem
\eqref{CY2}, \eqref{CY3} admits a unique solution.
\end{theorem}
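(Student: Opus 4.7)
The plan is to adapt the strategy of Theorem~\ref{KT} to this $3$-step nilmanifold setting, in which $M$ admits no invariant complex structure (as $b_1(M)=2$). I begin by invoking Lemma~\ref{filter} to fix an orthonormal coframe $(f^1,f^2,f^3,f^4)$ in which $\Om=f^{14}+f^{23}$, with the filtration $f^1\in\langle e^1\rangle$, $f^2\in\langle e^1,e^3\rangle$, $f^3\in\langle e^1,e^3,e^2\rangle$. A direct computation using \eqref{def} yields structure equations of the form $df^1=df^2=0$, $df^3=A\,f^{12}$, $df^4=B\,f^{12}+C\,f^{13}$ for real constants $A,B,C$, with $A\ne 0$ and, in the genuine $\Nil^4$ case, also $C\ne 0$.

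Next, choose coordinates $u,v$ on the base $\bT$ with $f^1=du$, $f^2=dv$ (possible after a constant linear change from $z,t$), and seek $\tOm=\Om+d\alpha$ for a $1$-form of the shape
\[\alpha \ =\ p_u\,f^4\ +\ p_v\,f^3\ +\ \beta,\]
where $p\in C^\infty(\bT)$ is a potential on the base and $\beta$ is a correction term linear in $f^1,\dots,f^4$ with coefficients polynomial in $p$ and its first derivatives, chosen so that $\tOm$ is of type $(1,1)$ with respect to $J$. Concretely, the $(1,1)$ condition equates the coefficients of $f^{13}$ and $f^{24}$ in $d\alpha$ and opposes those of $f^{12}$ and $f^{34}$; the role of $\beta$ is to absorb the non-$(1,1)$ contribution of $dp_u\we f^4+dp_v\we f^3$ produced by the $Cf^{13}$ term in $df^4$, which would otherwise obstruct the ansatz owing to the non-integrability of $J$.

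Expanding $\tOm^2$ using the structure equations, only the pairings $f^{14}\we f^{23}$ and $f^{13}\we f^{24}$ contribute to the top form $f^{1234}$, and the Calabi--Yau equation $\tOm^2=\eF\,\Om^2$ becomes a generalized Monge--Amp\`ere equation on $\bT$ of the form
\[(1+p_{uu})(1+p_{vv})-p_{uv}^2 \ =\ \eF+T(p,p_u,p_v),\]
where $T$ is a polynomial in $p$ and its first derivatives whose coefficients are proportional to the structure constants $B,C$. Integrating over $\bT$, one has $\int_\bT\det\mathrm{Hess}(p)=0$ by Stokes, while $T$ itself also has zero integral by integration by parts and periodicity; hence the hypothesis $\int_\bT(\eF-1)=0$ is precisely the necessary integrability condition for the reduced equation.

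Existence of a smooth solution $p\in C^\infty(\bT)$, together with the positivity of the associated Hessian that guarantees $\tOm$ is positive-definite, is the content of Section~\ref{GMA}. Once $p$ is obtained, $\tOm=\Om+d\alpha$ solves the Calabi--Yau problem, and uniqueness is immediate from Proposition~\ref{!}. The principal obstacle is Step~2: without an integrable $J$ to hand, the $(1,1)$ requirement must be enforced by an explicit choice of $\beta$, and this is precisely what produces the generalized (rather than classical) Monge--Amp\`ere equation whose analysis in Section~\ref{GMA} constitutes the real analytical content of the theorem.
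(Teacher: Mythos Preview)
Your overall architecture is right: normalize via Lemma~\ref{filter}, reduce to a scalar equation on $\bT$ via a potential $p$, invoke Section~\ref{GMA}, and conclude uniqueness from Proposition~\ref{!}. But two concrete points are misidentified, and they matter.

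First, the shape of the reduced equation. With $f^1=du$, $f^2=dv$, and $df^3=kf^{12}$, $df^4=lf^{12}+mf^{13}$, the second $(1,1)$ condition reads $(a_3)_u+ma_4=(a_4)_v$. Your na\"ive choice $a_4=p_u$, $a_3=p_v$ forces $mp_u=0$, so it fails outright. The fix (as in the paper) is $a_3=p_v-mp$, $a_4=p_u$; then the volume equation becomes
\[
(1+p_{uu})(1+p_{vv}-mp_v)-p_{uv}^2=\eF,
\]
which upon expansion contains the mixed term $m\,p_{uu}p_v$. This is \emph{not} of your asserted form $(1+p_{uu})(1+p_{vv})-p_{uv}^2=\eF+T(p,p_u,p_v)$ with $T$ depending only on zeroth and first derivatives. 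The first-order terms sit \emph{inside} the Hessian-like product, which is exactly the structure \eqref{GMAeq} treated in Section~\ref{GMA}; your claimed form is a different equation and is not what Theorem~\ref{th1} addresses.

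Second, the correction $\beta$. The remaining $(1,1)$ condition is $(a_2)_u-(a_1)_v+ka_3+la_4=0$, i.e.\ $(a_2)_u-(a_1)_v=kmp-kp_v-lp_u$. The right-hand side contains $kmp$, whose integral over $\bT$ need not vanish until one fixes the free additive constant in $p$; after that normalization one must still \emph{solve} for $(a_1,a_2)$ as a primitive of an exact $2$-form on $\bT$. These coefficients are therefore not ``polynomial in $p$ and its first derivatives'' but are obtained from an auxiliary Poisson-type step. The paper carries this out explicitly (equation \eqref{abq}); your description of $\beta$ glosses over it.
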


\begin{proof}
Let $(f^i)$ be a coframe as in Lemma~\ref{filter}. Then we can write
\begin{equation}\label{ACB}
  dt = Af^1,\qquad dz = Cf^1+Bf^2
\end{equation}
for some $A,C\in \R$ with $A>0$ and $B\ne0$. We also have
\begin{equation}\label{df3df4}
df^3=kf^{12},\qquad df^4=l f^{12}+mf^{13},
\end{equation}
with $k,m$ non-zero. Consider now a $1$-form
\[ \alpha = \sum_{i=1}^4 a_if^i,\]
whose coefficients $a_i=a_i(t,z)$ are functions on the base. We have
\[\begin{array}{rcl} 
d\alpha 
&=& a_3kf^{12}+a_4(l f^{12}+mf^{13})+\sum\limits_{i=1}^4da_i\we f^i
\\[10pt]
&=& (ka_3+l a_4+Aa_{2,t}-Ba_{1,z})f^{12} + (ma_4+Aa_{3,t}+Ca_{3,z})f^{13}
\\[5pt]
&& \hskip50pt+\,(Aa_{4,t}+Ca_{4,z})f^{14}+Ba_{3,z}f^{23}+Ba_{4,z}f^{24}.
\end{array}\]
To ensure that $d\alpha$ has type $(1,1)$ relative to $J$, we need
\begin{equation}\label{klm}
\begin{array}{c}
ka_3+l a_4+Aa_{2,t}-Ba_{1,z}=0,\\[5pt]
ma_4+Aa_{3,t}+Ca_{3,z}-Ba_{4,z}=0,
\end{array}
\end{equation}
so that
\[\Om+d\alpha = (1+Aa_{4,t}+Ca_{4,z})f^{14} + (1+Ba_{3,z})f^{23} + 
Ba_{4,z}(f^{13}-f^{42})\] belongs to $\left<\Om\right>+\La^-$, as in
Theorem~\ref{KT}. The volume constraint \eqref{CY2} is now
\begin{equation}\label{CY4}
 \left(1+Aa_{4,t}+Ca_{4,z}\right)\left(1+Ba_{3,z}\right)-(Ba_{4,z})^2 = \eF,
\end{equation}
where $F=F(t,z)$ is a function on the base. 

In view of Proposition~\ref{!}, we need only produce one solution. For
this purpose, define
\[a_3 = \frac BA p_z - \frac mA p,\qquad a_4 = p_t + \frac CA p_z.\]
Then \[\begin{aligned}
& A(ma_4+Aa_{3,t}+Ca_{3,z}-Ba_{4,z})\\
& \hskip20pt
= m(Ap_t + Cp_z)+A(Bp_{zt}-mp_t)+C(Bp_{zz}-mp_z)-B(Ap_{zt} + Cp_{zz})\\
& \hskip20pt
= 0,
\end{aligned}\]
giving the second equation in \eqref{klm}. After a long computation, 
\eqref{CY4} becomes \begin{equation}\label{GMAeq0}
\hfill
\Big(\frac{B^2+C^2}{AB^2} + p_{tt} - \frac{mC^2}{A^2\!B}p_z\Big) 
\Big(\frac A{B^2} + p_{zz} - \frac mB p_z\Big) \! -
\Big(\!\!-\!\frac C{B^2} + p_{zt} + \frac{mC}{AB}p_z\Big)^2 
\! = \frac1{B^2}\eF\!\!,
\end{equation}
Equations of this type are discussed in the next section. The
necessary normalization of $\eF$ is part of our hypothesis, and
Theorem~\ref{th1} will guarantee a solution unique up to the addition
of a constant.

It remains to satisfy the first equation in \eqref{klm}, which
becomes
\[ kBp_z- mkp+Al p_t + l Cp_z+A^2a_{2,t}-ABa_{1,z}=0.\]
So far, $a_1$ and $a_2$ are unconstrained, but they must end up
biperiodic. So it suffices to solve the simpler equation
\begin{equation}\label{abq}
(\ta_2)_t-(\ta_1)_z = q,
\end{equation}
where $\ta_1,\ta_2$ and $q=-kmp$ are biperiodic, and then add or
subtract multiplies of $p$ from $\ta_1,\ta_2$ to get $a_1,a_2$
respectively. Noting that \eqref{abq} translates into
\[ d(\ta_1\,dt+\ta_2\,dz)=q\,dt\we dz,\]
we first adjust $q$ by a constant so that
$\int_\bT q\,f^{12}=0$. This means that $[q\,dt\we dz]$ vanishes in
$H^2(M,\R)$, so the 2-form is exact and \eqref{abq} has a solution
with $\ta_1,\ta_2\in C^\infty(\bT)$.
\end{proof}

Just as in Section~\ref{Kodaira}, the solution $d\alpha$ of the
Calabi--Yau problem involves a well-identifiable component on the base
as in \eqref{dalpha}, arising from \eqref{abq}.

\section{The generalized Monge--Amp\`ere equation}\label{GMA}

In this section, we shall work exclusively with functions defined on a
2-torus $\bT=\bT^2$. More precisely, we write $p\in C^\infty(\bT)$ to
mean that $p\colon \R^2\to\R$ is a smooth function satisfying
\begin{equation}\label{biperiod} p(x+1,y)=p(x,y)=p(x,y+1).
\end{equation}
Since we have already dealt with the fibration over $\bT^2_{xy}$ in
Section~\ref{Kodaira}, the pair of coordinates $(x,y)$ will in
applications take on the role of either $(t,z)$ (for the previous
section) or $(y,t)$ (for the next section).

Given a function $F\in C^\infty(\bT)$, fix a real positive-definite
symmetric matrix
$$
\left(\begin{array}{cc}
a&c\\
c&b
\end{array}
\right)>0,
$$ and two $2\times 2$ real matrices $(l_{ij})$ and $(m_{ij})$
satisfying
\begin{equation}\label{lm}
\begin{array}{l}
m_{11}l_{22}=0,\\
l_{11}l_{22}-l_{12}^2=0\\
m_{11}m_{22}-m_{12}^2=0\\
l_{11}m_{22}+l_{22}m_{11}-2l_{12}m_{12}=0.
\end{array}
\end{equation}
Consider the partial differential equation
\begin{equation}\label{GMAeq}
 \hskip5pt
(a+p_{xx}-l_{11}p_x-m_{11}p_y)(b+p_{yy}-l_{22}p_x-m_{22}p_y)
-(c+p_{xy}-l_{12}p_x-m_{12}p_y)^2=\eF,
\end{equation}
for $p\in C^\infty(\bT)$ satisfying
\begin{equation}
\left(\begin{array}{cc}
a+p_{xx}-l_{11}p_x-m_{11}p_y & c+p_{xy}-l_{12}p_x-m_{12}p_y\\[2pt]
c+ p_{xy} -l_{12}p_x-m_{12}p_y & b+p_{yy}-l_{22}p_x-m_{22}p_y
\end{array}\right)>0
\label{16new}
\end{equation}
at all points on the 2-torus $\bT$.

\begin{ex}
After the change of coordinates from $(x,y)$ to $(t,z)$, the equation
\eqref{GMAeq0} that arose in the previous section is obtained by
substituing
\[ a=\frac A{B^2},\quad b=\frac{B^2+C^2}{AB^2},\quad c=-\frac C{B^2};
\qquad m_{11}=\frac mB,\quad m_{22}=\frac{m C^2}{A^2B},\quad
m_{12}=-\frac{mC}{AB},\] 
and $(l_{ij})=0$. Conditions \ref{lm}
and \ref{16new} are verified. The particular case
\[ a=1,\quad b=1,\quad c=0;\qquad m_{11}=1,\quad m_{22}=m_{12}=0\]
occurs for the standard choice $\Om=e^{13}+e^{42}$ of symplectic form
in Theorem~\ref{3stepthm}. The matrix $(l_{ij})$ would intervene if
the linear transformation \eqref{ACB} were no longer triangular. We
have opted for the general form \eqref{GMAeq} so as to emphasize the
invariance of our problem under an arbitrary linear change of
coordinates on the 2-torus base.
\end{ex}

The main result of this section is

\begin{theorem}\label{th1}
Let $a,b,c,(l_{ij}),(m_{ij})$ and $F$ be as above. Then
{\rm(\ref{GMAeq})} and {\rm(\ref{16new})} have a solution $p\in
C^2(\bT)$ if and only if
\begin{equation}
\int_\bT(\eF-ab+c^2)=0.
\label{17newnew}
\end{equation}
Moreover, $p$ is unique modulo addition of a constant, and any $C^2$
solution is in $C^\infty$.
\label{thm1}
\end{theorem}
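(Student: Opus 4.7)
The plan is to establish necessity by direct integration, uniqueness by a pointwise Lorentzian argument adapted from Remark~\ref{!!}, and existence by a continuity method modelled on the proof of Li's theorem quoted at the end of Section~\ref{Kodaira}, with modifications to absorb the first-order perturbations $Lp_x+Mp_y$.

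For necessity, I expand the left-hand side of \eqref{GMAeq} and integrate over $\bT$. Terms linear in derivatives of $p$ vanish by biperiodicity; mixed products of second and first derivatives vanish after one integration by parts (e.g.\ $\int_\bT p_{xx}p_y=-\tfrac12\int_\bT(p_x^2)_y=0$); and the pure Hessian determinant $p_{xx}p_{yy}-p_{xy}^2$ equals the divergence $(p_xp_{yy})_x-(p_xp_{xy})_y$. What remains is a pointwise quadratic form in $(p_x,p_y)$ whose three coefficients are precisely $l_{11}l_{22}-l_{12}^2$, $l_{11}m_{22}+l_{22}m_{11}-2l_{12}m_{12}$ and $m_{11}m_{22}-m_{12}^2$, all zero by \eqref{lm}. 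The integral therefore reduces to $(ab-c^2)|\bT|$, and \eqref{17newnew} is necessary.

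For uniqueness let $H=S+\mathrm{Hess}(p)-Lp_x-Mp_y$, with $S$ the matrix of constants $a,b,c$; this is positive-definite by \eqref{16new} and satisfies $\det H=\eF$. Two solutions $p,\tilde p$ produce $H,\tilde H$ of equal determinant, which I view as forward-timelike vectors in $\R^{1,2}$ under the Lorentzian form whose norm squared is the determinant. Equality of norms makes $H-\tilde H$ automatically Lorentz-orthogonal to the timelike $H+\tilde H$, hence spacelike or null. Applying the necessity computation now to $p'=p-\tilde p$ with $a=b=c=0$ gives $\int_\bT\det(H-\tilde H)=0$; combined with the pointwise sign this forces $\det(H-\tilde H)\equiv 0$. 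A null vector orthogonal to a timelike vector must vanish, so $H=\tilde H$ pointwise, yielding the overdetermined linear system $p'_{xx}=l_{11}p'_x+m_{11}p'_y$ together with the analogous equations for $p'_{yy}$ and $p'_{xy}$. Under \eqref{lm} a short case analysis (the non-zero parts of $L$ and $M$ are rank-one matrices whose principal directions lie along a common coordinate axis) reduces this to a system whose only biperiodic solutions are constants.

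For existence I would use the continuity method along $F_t=tF+(1-t)\log(ab-c^2)$, with the trivial solution $p\equiv0$ at $t=0$. Openness comes from the inverse function theorem applied to the linearization
\[ \mathcal L_p\,\eta \ = \ H^{ij}\bigl(\eta_{ij}-L_{ij}\eta_x-M_{ij}\eta_y\bigr),\]
where $H^{ij}$ is the cofactor matrix of $H$: this is uniformly elliptic on $\bT$ and annihilates constants, so $\ker\mathcal L_p=\R$ by the strong maximum principle, and the freedom to shift $F_t$ by a constant handles the one-dimensional cokernel on Hölder spaces $C^{k,\alpha}(\bT)$. Closedness requires uniform $C^{2,\alpha}$ estimates along the family: $C^0$ by an Alexandrov-type comparison, $C^1$ by concavity of $\log\det$, a $C^2$ bound via a Pogorelov-style maximum principle, $C^{2,\alpha}$ by Evans--Krylov, and $C^\infty$ by bootstrap. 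I expect the main technical obstacle to be the $C^2$ estimate, since differentiating the equation twice produces commutator terms involving $Lp_x+Mp_y$ that are absent in the classical Monge--Amp\`ere case; the conditions \eqref{lm} are precisely the pointwise algebraic identities that drove the necessity computation, and I expect them to absorb these commutators and allow Li's scheme to go through with only minor modifications.
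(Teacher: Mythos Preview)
Your necessity argument is essentially the paper's. Your uniqueness argument is genuinely different: the paper subtracts the two equations, applies the mean value theorem to write a linear uniformly elliptic equation for $(p+h)-\tilde p$ (after shifting by a constant $h$ so the minimum is zero), and invokes the strong maximum principle. Your Lorentzian route, patterned on Remark~\ref{!!}, is correct and more geometric; the one step you leave unspecified---that the only biperiodic solutions of $\mathrm{Hess}(p')=Lp'_x+Mp'_y$ are constants---does go through via the case split on $m_{11}l_{22}=0$ and periodicity in each variable separately, but it is not quite as short as you suggest.

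The genuine gap is in your a priori estimates. You have both the order and the mechanism of the gradient bound wrong: concavity of $\log\det$ is a $C^2$-level device, not a source of $C^1$ control, and there is no Alexandrov-type comparison here to launch the scheme. In the paper the $C^1$ bound comes \emph{first}, and this is precisely where the structural hypothesis $m_{11}l_{22}=0$ from \eqref{lm} enters the existence proof (the remaining three identities in \eqref{lm} are used only for necessity). Positivity of the diagonal entries in \eqref{16new} gives two scalar inequalities; the condition $m_{11}l_{22}=0$ guarantees that one of them---say $a+p_{xx}-l_{11}p_x>0$ when $m_{11}=0$---is an ordinary differential inequality in a single variable. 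An elementary lemma for $1$-periodic functions (if $f''+\alpha f'\ge\beta$ then $|f'|\le 2|\beta|e^{2|\alpha|}$) then bounds $|p_x|$ outright, and feeding that into the other diagonal entry bounds $|p_y|$ by the same lemma. Oscillation control then follows from $|\nabla p|$, the $C^2$ estimate from Schulz's theorem for two-dimensional Monge--Amp\`ere equations with bounded lower-order terms, and $C^{2,\mu}$ from Nirenberg; no Pogorelov argument is needed. Your expectation that \eqref{lm} is what absorbs commutator terms at the $C^2$ stage is therefore misplaced: once $|\nabla p|$ is under control the first-order perturbation is harmless and the higher estimates are standard.
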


\noindent\textit{Proof.} We first prove that (\ref{17newnew}) is a
necessary condition for (\ref{GMAeq}) to have a solution. Using the
periodicity of $p$, we see that the following functions integrate to
zero over the 2-torus:
\[\begin{array}{c}
p_x,\quad p_y,\quad p_{xx},\quad p_{xy},\quad p_{yy},\\[5pt]
p_{xx}p_x= \frac12\pd_x(p_x^2),\quad p_{yy}p_y,\quad
p_{xy}p_x= \frac12\pd_y(p_x^2),\quad p_{xy}p_y.
\end{array}\]
Integrating by parts to obtain the last function, we also have
\[\int_\bT p_{yy}p_x=-\!\int_\bT p_yp_{xy}=0,\qquad \int_\bT p_{xx}p_y = 0.\]
If \eqref{GMAeq} has a solution $p\in C^2(\bT)$ then, from above and 
assumptions on $(l_{ij}),(m_{ij})$,
\[\begin{array}{rl}\displaystyle
\int_\bT\eF\!\!
&=\displaystyle\ 
\int_\bT\big[(a+p_{xx})(b+p_{yy})-(c+p_{xy})^2\\
&\hskip50pt\displaystyle\ 
+(l_{11}p_x+m_{11}p_y)(l_{22}p_x+m_{22}p_y)
-(l_{12}p_x+m_{12}p_y)^2\big]\\[10pt]
&= \displaystyle\
(ab-c^2)|\bT|+ \int_\bT\left[p_{yy}p_{xx}-p_{xy}^2\right]\\
&\hskip20pt\displaystyle
+\!\int_\bT\Big[ p_x^2(l_{11}l_{22}-l_{12}^2)+p_y^2(m_{11}m_{22}-m_{12}^2)
+p_xp_y(l_{11}m_{22}+l_{22}m_{11}-2l_{12}m_{12})\Big]\\[15pt]
&=\displaystyle\
(ab-c^2)|\bT|+ \int_\bT\left[p_{yy}p_{xx}-p_{xy}^2\right].
\end{array}\]

Similarly,
\begin{equation}\label{intparts}
\int_\bT\left[ p_{yy}p_{xx}-p_{xy}^2\right]
=\int_\bT\left[ -p_y p_{xxy} -p_{xy}^2\right]
=0.
\end{equation}
It follows that \eqref{17newnew} holds.

Now we prove the uniqueness of solutions modulo addition of constants.
Let $p$ and $\widetilde p$ be two solutions of (\ref{GMAeq}) and
(\ref{16new}).  Let $h$ be a constant such that
$$
\min_\bT\big[\,(p+h)-\widetilde p\,\big]=0.
$$
Clearly $p+h$ is also a solution of
(\ref{GMAeq}) and (\ref{16new}).
By the mean value theorem, we have
\begin{eqnarray*}
0\!\!&=&\!\!
a_{11}(x,y) \pd_{yy}
\big[ (p+h)-\widetilde p\,\big]
+2a_{12}(x,y) \pd_{xy}
\big[(p+h)-\widetilde p\,\big]
+ a_{22} (x,y) \pd_{yy}
\big[(p+h)-\widetilde p\,\big]
\\&&\hskip50pt
+b(x,y) \pd_y\big[(p+h)-\widetilde p\,\big],
\end{eqnarray*}
where $(a_{ij}(x,y))$ is a positive-definite symmetric matrix
function, which like $b(x,y)$, is continuous on $\bT$. Thus, by the
strong maximum principle, $(p+h)-\widetilde p\equiv0$. The uniqueness
result follows.

Since the equation is elliptic, and all data is smooth, a $C^2$
solution $x$ is in $C^\infty$.

To prove the existence part of the theorem, we use the method of continuity.
This requires a priori estimates which we derive below.

\begin{lemma}
Let $f\in C^2(\R)$ be a $1$-periodic function satisfying, for some
$\alpha, \beta\in R$,
$$
f''(s)+\alpha f'(s)\ge \beta,\qquad -\infty<s<\infty.
$$
Then
$$
|f'(s)|\le
2|\beta|e^{ 2|\alpha|} ,\qquad\forall\ s\in\R.
$$
\label{lem1}
\end{lemma}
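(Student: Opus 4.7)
The plan is to turn the differential inequality into an integral inequality via the standard integrating factor $e^{\alpha s}$, then use $1$-periodicity of $f'$ to produce a zero of $f'$ within distance $1$ of any given point $s$, from which one can integrate both forwards and backwards.

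First I would observe that integrating the inequality over a single period $[0,1]$ and using $\int_0^1 f'\,ds=0=\int_0^1 f''\,ds$ (both consequences of $1$-periodicity) forces $\beta\le 0$; if $\beta>0$ there is no admissible $f$ and the lemma is vacuous. So assume $\beta\le 0$, i.e.\ $|\beta|=-\beta$. Multiplying the inequality by $e^{\alpha s}$ gives
$$
\frac{d}{ds}\bigl(e^{\alpha s}f'(s)\bigr)\;\ge\;\beta\, e^{\alpha s}.
$$
By Rolle's theorem applied to the periodic function $f$ on each interval $[k,k+1]$, $f'$ has at least one zero per unit interval. Hence for every $s\in\R$ there exists a zero $s_0$ of $f'$ with $s_0\le s\le s_0+1$, and then $s_0+1$ is also a zero of $f'$.

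Next I would integrate the transformed inequality once from $s_0$ to $s$ and once from $s$ to $s_0+1$, using $e^{\alpha s_0}f'(s_0)=0=e^{\alpha(s_0+1)}f'(s_0+1)$, to obtain the two-sided bound
$$
\beta\int_{s_0}^{s}\!e^{\alpha\tau}\,d\tau\;\le\;e^{\alpha s}f'(s)\;\le\;-\beta\int_{s}^{s_0+1}\!e^{\alpha\tau}\,d\tau,
$$
so that
$$
\bigl|e^{\alpha s}f'(s)\bigr|\;\le\;|\beta|\int_{s_0}^{s_0+1}\!e^{\alpha\tau}\,d\tau\;\le\;|\beta|\,e^{\alpha s_0}\,e^{|\alpha|},
$$
where the last step is the crude bound $e^{\alpha\tau}\le e^{\alpha s_0+|\alpha|}$ on $[s_0,s_0+1]$. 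Multiplying through by $e^{-\alpha s}$ and using $|s-s_0|\le 1$ to estimate $e^{\alpha(s_0-s)}\le e^{|\alpha|}$ delivers
$$
|f'(s)|\;\le\;|\beta|\,e^{2|\alpha|},
$$
which implies the stated bound (with room to spare, confirming that the factor of $2$ in front is simply slack).

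There is essentially no obstacle; the argument is a one-dimensional ODE exercise and the main point is to handle both signs of $\alpha$ uniformly, which the crude estimate $e^{\alpha(s_0-s)}\le e^{|\alpha|}$ accomplishes in one stroke. The case $\alpha=0$ is covered by the same formulas, since the integrating factor becomes trivial and the inequality $(f')'\ge\beta$ integrated from $s_0$ gives $|f'(s)|\le|\beta|$ directly.
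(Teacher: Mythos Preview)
Your argument is correct and follows the same strategy as the paper: apply the integrating factor $e^{\alpha s}$, use periodicity to locate a zero of $f'$, and integrate the resulting inequality between $s$ and that zero. The two presentations differ only in bookkeeping. The paper fixes a single zero $\bar s\in[0,1)$ and integrates separately over $[\bar s,s]$ for $s\in[1,2]$ and over $[s,\bar s]$ for $s\in[-1,0]$, obtaining the lower and upper bounds on disjoint intervals and then invoking periodicity; this is why the factor $2$ appears (the integration interval can have length up to $2$). You instead sandwich each $s$ between consecutive zeros $s_0$ and $s_0+1$ and integrate in both directions at once, which keeps all intervals of length at most $1$ and yields the sharper bound $|f'(s)|\le|\beta|e^{2|\alpha|}$. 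Your preliminary observation that necessarily $\beta\le0$ is a nice touch that clarifies the signs, though the paper's version works without it.
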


\begin{proof}
We know that
$$
\frac d{ds}\big [\eE^{\alpha s} f'(s)\big]\ge \beta \eE^{as}.
$$
Since $f$ is $1$-periodic, there exists $\bar s\in [0, 1)$
such that $f'(\bar s)=0$.
It follows that
$$
 \eE^{\alpha s} f'(s)\ge \beta\!\int_{ \bar s }^s\eE^{\alpha t}\,dt, 
\qquad s\ge 1.
$$
This implies
$$
 f'(s)\ge
 \eE^{-\alpha s} \beta\!\int_{ \bar s }^s\eE^{\alpha t}\,dt
\ge -2|\beta|\eE^{ 2|\alpha|},\qquad 1\le s\le 2.
$$
Similarly,
$$
-\eE^{as}
f'(s)\ge \beta\!\int_s^{ \bar s}\eE^{\alpha t}\,dt,\qquad s\le 0,
$$
and
$$
-f'(s)\ge \beta \eE^{-\alpha s} \int_s^{ \bar s}
\eE^{\alpha t}\,dt
\ge -2|\beta|\eE^{ 2|\alpha|},\qquad\quad  -1\le s\le 0.
$$ The desired estimate follows from the above and the $1$-periodicity
of $f$.
\end{proof}

For $0\le t\le 1$, consider the equation
\begin{eqnarray}
&&\kern-10pt
(a+p_{xx}-tl_{11}p_x-tm_{11}p_y)
(b+p_{yy}-tl_{22}p_x-tm_{22}p_y) -(c+p_{xy}-tl_{12}p_x-tm_{12}p_y)^2
\nonumber\\
&&\hskip100pt = t\eF+(1-t)(ab-c^2)
\qquad\mbox{on}\quad\bT,
\label{15newt}
\end{eqnarray}
together with the condition
\begin{equation}
 \left(\begin{array}{cc}
a+p_{xx}-tl_{11}p_x-tm_{11}p_y & c+p_{xy}-tl_{12}p_x-tm_{12}p_y\\[2pt]
c+p_{xy}-tl_{12}p_x-tm_{12}p_y& b+p_{yy}-tl_{22}p_x-tm_{22}p_y
\end{array}\right)
>0
\qquad \mbox{on}\quad\bT.
\label{16newt}
\end{equation}
For this problem, we have

\begin{lemma} For $0\le t\le 1$, let $p\in C^2(\bT)$ be a solution of
{\rm(\ref{15newt})} and {\rm(\ref{16newt})}.  Then
\begin{equation}
|\nabla p|\le C \qquad\mbox{on}\quad\bT,
\label{18}
\end{equation}
and
\begin{equation}
\bigg|\>p- \frac 1{|\bT|}\!\int_{  \bT  } p\,\bigg|
\le C \qquad\mbox{on}\quad\bT,
\label{19}
\end{equation}
where  $C$ depends only on $F$ and $a,b,c,(l_{ij}),(m_{ij})$.
\label{lem2}
\end{lemma}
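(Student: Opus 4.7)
The plan is to exploit the algebraic constraints \eqref{lm} together with the pointwise positivity \eqref{16newt} so that Lemma~\ref{lem1} can be applied separately to bound $p_x$ and $p_y$. The very first relation in \eqref{lm}, namely $m_{11}l_{22}=0$, will drive a case split that breaks the potential circularity between the two gradient components (each of which a priori requires the other).

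In the case $m_{11}=0$, the $(1,1)$-entry of \eqref{16newt} simplifies to
\[p_{xx}-tl_{11}p_x>-a.\]
Freezing $y$, the function $f(x)=p(x,y)$ is smooth and $1$-periodic, and Lemma~\ref{lem1} with $\alpha=-tl_{11}$, $\beta=-a$ yields a bound $|p_x|\le C_1$ on $\bT$ depending only on $a$ and $|l_{11}|$. Substituting $|p_x|\le C_1$ back into the $(2,2)$-entry of \eqref{16newt} gives
\[p_{yy}-tm_{22}p_y>-b-|l_{22}|C_1,\]
and freezing $x$ and applying Lemma~\ref{lem1} a second time produces $|p_y|\le C_2$. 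In the opposite case $l_{22}=0$, the roles of $x$ and $y$ are reversed: the $(2,2)$-entry now directly gives a bound on $|p_y|$ via Lemma~\ref{lem1}, after which substitution into the $(1,1)$-entry controls $|p_x|$. All constants obtained in this way are manifestly uniform in $t\in[0,1]$ (since $e^{2t|\cdot|}\le e^{2|\cdot|}$) and independent of $F$, so \eqref{18} follows.

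With \eqref{18} in hand, \eqref{19} follows by a standard Poincar\'e-type argument: by continuity $p$ attains its mean at some point $Q\in\bT$, and any other point $P\in\bT$ may be joined to $Q$ by a path in $\bT$ of length at most $\mathrm{diam}(\bT)\le\sqrt{2}/2$, so integrating $|\nabla p|$ along the path gives $|p(P)-\bar p|\le C\,\mathrm{diam}(\bT)$.

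The main obstacle is recognising that the case split driven by $m_{11}l_{22}=0$ is exactly what breaks the apparent coupling between the two derivatives in \eqref{16newt}; the remaining three conditions in \eqref{lm} play no role in this a priori estimate and were instead used to force the necessity of \eqref{17newnew}. Once the case split is in place, each bound reduces to a one-variable ODE inequality of the form covered by Lemma~\ref{lem1}.
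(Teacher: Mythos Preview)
Your proof is correct and follows essentially the same route as the paper: the case split driven by $m_{11}l_{22}=0$ decouples the two diagonal entries of \eqref{16newt}, after which Lemma~\ref{lem1} is applied twice in succession to bound $|p_x|$ and $|p_y|$, and \eqref{19} is deduced from \eqref{18} by the standard path-integration argument on the torus. Your remark that the gradient bound is actually independent of $F$, and that the last three conditions in \eqref{lm} are irrelevant here, is accurate and slightly sharper than what the paper records.
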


\begin{proof}
We know that $m_{11}l_{22}=0$. If $l_{22}\ne0$, then $m_{11}=0$, and,
in view of \eqref{16newt}, $a+p_{xx}-tl_{11}p_x$ and
$b+p_{yy}-tl_{22}p_x-tm_{22}p_y$ are positive on $\bT$. Applying Lemma
\ref{lem1} with $(\alpha,\beta)$ equal to $(-tl_{11},-a)$ leads to
$|p_x|\le C$. Now we apply Lemma \ref{lem1} with 
\[(\alpha,\beta)=(-tm_{22},\ -b - \|l_{22}p_x\|_{ L^\infty })\] to obtain
$|p_y|\le C$.  If $m_{11}\neq 0$, then $l_{22}=0$, and obtain
(\ref{18}) similarly.  Estimate (\ref{19}) follows from (\ref{18}),
bearing in mind the definition \eqref{biperiod} of $\bT$.
\end{proof}

\begin{lemma} For $0\le t\le 1$, let $p\in C^2(\bT)$ be a solution of
{\rm\eqref{15newt}} and {\rm\eqref{16newt}}.  Then
$$
|\nabla ^2  p|
\le C \qquad \mbox{on}\quad\bT,
$$
where $C$ depends only on $F$ and $a,b,c,(l_{ij}),(m_{ij})$.

\label{lem3}
\end{lemma}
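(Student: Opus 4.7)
The plan is to use the Pogorelov--Yau maximum principle technique: reduce the $C^2$ bound to an upper bound on $\Delta p$, and then obtain the latter by applying the maximum principle on the torus $\bT$ to an auxiliary test function built from $\Delta p$ and $e^{-\lambda p}$.

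Let $(W_{ij})$ denote the positive-definite symmetric matrix appearing in \eqref{16newt}, so that \eqref{15newt} reads $\det W = t\eE^F + (1-t)(ab-c^2)$, a quantity uniformly bounded between two positive constants independent of $t$. Since $W>0$ and $\det W$ is controlled, the largest eigenvalue of $W$ is bounded as soon as $\mathrm{tr}(W)$ is. The first-order contributions $l_{ij}p_x + m_{ij}p_y$ inside $W_{ij}$ are already controlled by Lemma~\ref{lem2}, so it suffices to prove an upper bound on $\Delta p = p_{xx} + p_{yy}$; a two-sided bound on each individual $p_{ii}$ and on $p_{xy}$ then follows automatically from the positivity of $W$ and the bound on its determinant.

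Next, let $\mathcal{L} = W^{ij}\partial_i\partial_j$ denote the linearization of $\log\det W$, with $(W^{ij})$ the positive-definite inverse of $(W_{ij})$. Taking the logarithm of \eqref{15newt} and differentiating twice in $x$ and $y$, the concavity of $\log\det$ on positive matrices yields, after summation over the two coordinate directions, an inequality of the form
\[
\mathcal{L}(\Delta p) \ \ge \ W^{ia}W^{jb}\sum_{k}(\partial_k W_{ab})(\partial_k W_{ij}) - C_0 - C_1\bigl(\mathrm{tr}(W^{-1}) + |\nabla(\Delta p)|\bigr),
\]
where $C_0,C_1$ depend only on $F$, on the data $a,b,c,(l_{ij}),(m_{ij})$, and on the $C^1$-bound of Lemma~\ref{lem2}. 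The key feature is that the quadratic-in-third-derivative term on the right is nonnegative and will later absorb the bad $|\nabla(\Delta p)|$ contribution. To activate that absorption at an interior extremum, I would apply the maximum principle to the auxiliary function
\[
\Phi \ = \ e^{-\lambda p}(K + \Delta p),
\]
with $\lambda, K>0$ chosen large enough that $K+\Delta p > 0$ everywhere. At a maximum point $x_0\in\bT$, the identity $\nabla\Phi(x_0)=0$ allows $\nabla(\Delta p)$ to be replaced by $(K+\Delta p)\nabla p$, which is bounded by Lemma~\ref{lem2}, while $\mathcal{L}\Phi(x_0)\le 0$ combined with the retained positive quadratic term delivers $(K+\Delta p)(x_0)\le C$. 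Hence $\Delta p\le C$ on $\bT$.

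The main obstacle is that, unlike in the classical Monge--Amp\`ere situation, the matrix $W$ carries the first-order perturbations $l_{ij}p_x + m_{ij}p_y$, which produce additional third-derivative terms when $\mathcal{L}$ is applied to $\Delta p$ and when the equation is differentiated. The algebraic identities \eqref{lm} are precisely what allow these extra cross terms to be absorbed into the positive gradient term via Cauchy--Schwarz after summing over both coordinate directions, and the Yau-style factor $e^{-\lambda p}$ in $\Phi$ is what neutralises the remaining uncontrolled first-order contribution in the resulting differential inequality. Once the $C^2$-bound is in hand, the $C^\infty$-bounds required to run the method of continuity follow by standard linear elliptic theory applied to the uniformly elliptic equation \eqref{15newt}.
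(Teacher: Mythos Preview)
Your approach differs from the paper's, whose proof is a single sentence: once the $C^1$ bound of Lemma~\ref{lem2} is in hand, the second-derivative estimate follows directly from Schulz's interior $C^2$ estimate for two-dimensional Monge--Amp\`ere equations \cite[Theorem~1]{S}. No maximum-principle computation is carried out in the paper.

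Your Pogorelov--Yau outline is a legitimate self-contained alternative, but as written it has gaps arising from the first-order terms in $W$. Their contribution to $\mathcal{L}(\Delta p)$ is $tW^{ij}l_{ij}(\Delta p)_x + tW^{ij}m_{ij}(\Delta p)_y$, so the factor multiplying $|\nabla(\Delta p)|$ in your displayed inequality is of order $\mathrm{tr}(W^{-1})$, not a fixed constant $C_1$. After substituting $\nabla(\Delta p)=\lambda(K+\Delta p)\nabla p$ at the maximum of $\Phi$ --- note this is \emph{not} ``bounded by Lemma~\ref{lem2}'', since only $\nabla p$ is controlled there --- the resulting bad term has size of order $\lambda\,\mathrm{tr}(W^{-1})(K+\Delta p)$, the same order as the good term produced by the weight $e^{-\lambda p}$, and it also competes with the $\lambda^2 W^{ij}p_ip_j$ contribution. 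The balance can be struck, but it requires care you have not supplied. Finally, the identities \eqref{lm} are used in the paper for the $C^1$ bound (Lemma~\ref{lem2}) and for the necessity of \eqref{17newnew}, not for the $C^2$ step; Schulz's theorem, and indeed a correctly executed Pogorelov argument, needs only the $C^1$ bound as input here. You should also note that differentiating the equation twice presupposes $p\in C^4$, whereas the lemma is stated for $C^2$ solutions; this is harmless for the continuity method but worth flagging.
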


\begin{proof}
With Lemma \ref{lem2}, this follows from \cite[theorem 1]{S}.
\end{proof}

\begin{lemma} For $0\le t\le 1$, let $p\in C^2(\bT)$ be a solution of
{\rm\eqref{15newt}} and {\rm\eqref{16newt}}. Then
$p\in C^\infty(\bT)$ and for any positive integer $k$,
$$
\left\|\>p-\frac 1{|\bT|}\!\int_{  \bT  } p\,\right\|
_{C^k(\bT) } \le C,
$$
where $C$ depends only on $k$, $F$, and $a,b,c,(l_{ij}),(m_{ij})$.
\label{lem4}
\end{lemma}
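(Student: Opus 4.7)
The plan is to bootstrap the $C^{1,1}$ estimate from Lemmas \ref{lem2} and \ref{lem3} up to a $C^{k,\alpha}(\bT)$ estimate for every $k$, using uniform ellipticity of \eqref{15newt} combined with Evans--Krylov and Schauder theory. Let $M[p]$ denote the symmetric matrix on the left-hand side of \eqref{16newt}. The previous lemmas bound $|\nabla p|$ and $|\nabla^2 p|$ a priori, so the trace of $M[p]$ is bounded above by a constant depending only on the data. Meanwhile \eqref{15newt} reads $\det M[p]=t\eF+(1-t)(ab-c^2)$, which is uniformly bounded below by a positive constant since $ab-c^2>0$ and $F$ is bounded. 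Combined with \eqref{16newt}, this pins the eigenvalues of $M[p]$ inside a fixed compact subinterval of $(0,\infty)$, and therefore the fully nonlinear operator
\[ G(D^2 p,\nabla p,x,y)\ :=\ \log\det M[p]\ =\ \log\bigl(t\eF+(1-t)(ab-c^2)\bigr)\]
is uniformly elliptic and concave in $D^2 p$ along solutions.

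I would then invoke the Evans--Krylov theorem, adapted to absorb the linear first-order terms, to obtain an a priori bound $\|p\|_{C^{2,\alpha}(\bT)}\le C$ for some $\alpha\in(0,1)$ and some $C$ depending only on the data. From this the argument is a routine bootstrap. Differentiating \eqref{15newt} once in either $x$ or $y$ yields a linear second-order elliptic equation for $p_x$ (respectively $p_y$) whose coefficients, arising from the cofactor matrix of $M[p]$, lie in $C^\alpha(\bT)$, and whose right-hand side is a polynomial in $F$, $\nabla F$ and derivatives of $p$ of order at most two, hence also in $C^\alpha(\bT)$. Schauder estimates, applied globally on the compact torus, give a $C^{3,\alpha}(\bT)$ bound on $p$. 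An induction on $k$ then upgrades a $C^{k,\alpha}$ bound to a $C^{k+1,\alpha}$ bound one order at a time, yielding $p\in C^\infty(\bT)$ together with the claimed $C^k$ estimate for every $k$.

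The main obstacle is the Evans--Krylov step, because the equation carries explicit first-order terms and is posed on a torus rather than in a Euclidean domain; however, the necessary modifications are standard since uniform ellipticity is already in hand. Finally, subtracting $|\bT|^{-1}\!\int_\bT p$ leaves the equation invariant and does not alter any derivative, so the derivative part of the $C^k$-norm is controlled by the bootstrap, while the $C^0$-part of $p-|\bT|^{-1}\!\int_\bT p$ is controlled by estimate \eqref{19} in Lemma \ref{lem2}.
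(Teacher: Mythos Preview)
Your argument is correct and follows the same overall architecture as the paper: use Lemmas~\ref{lem2} and~\ref{lem3} to control $|\nabla p|$ and $|\nabla^2 p|$, upgrade to a $C^{2,\alpha}$ bound, then bootstrap via Schauder. The one substantive difference is the tool you invoke for the $C^{2,\alpha}$ step. The paper appeals to Nirenberg's 1953 theorem \cite{N}, a classical two-dimensional result that produces H\"older continuity of second derivatives directly from a $C^2$ bound for equations of Monge--Amp\`ere type; since \eqref{15newt} is genuinely two-dimensional, this older result suffices and sidesteps any worry about lower-order terms. Your use of Evans--Krylov is a perfectly valid alternative---it is dimension-independent and, as you note, the extension to equations with first-order terms and smooth $x$-dependence is standard once uniform ellipticity and concavity in $D^2p$ are in hand---but it is a heavier hammer than the problem requires. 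Either route leads to the same Schauder bootstrap and the same conclusion.
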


\begin{proof}
With Lemmas \ref{lem2} and \ref{lem3}, it follows from a theorem of
Nirenberg \cite{N} that for some constant $0<\mu<1$, and some constant
$C$,
$$
\left\|\>p -  \frac 1{|\bT|} \int_{  \bT  } p\,\right\|_{  C^{2, \mu}(\bT) }
\le C,
$$
for all solutions of (\ref{15newt}) and (\ref{16newt}), and for all
$0\le t\le 1 $.  The higher derivative estimates then follow from
Schauder estimates.
\end{proof}

For $0<\mu<1$ and $0\le t\le 1$, let
$$
X^{2,\mu}:=\Big\{p\in C^{2, \mu}(\bT)\ |\
\int_{ \bT}p = 0\Big\},
$$
$$
X^\mu:=\Big\{q\in C^ \mu(\bT)\ |\
\int_{ \bT}q = 0\Big\},
$$
and
\begin{eqnarray*}
S_t(p)\kern-8pt &:=&\!\!
(a+p_{xx}-tl_{11}p_x-tm_{11}p_y)
(b+p_{yy}-tl_{22}p_x-tm_{22}p_y)\\[3pt]
&&\hskip50pt
-(c+p_{xy}-tl_{12}p_x-tm_{12}p_y)^2 -\big[t\eF+(1-t)(ab-c^2)\big].
\end{eqnarray*}
Integration by parts (\eqref{intparts} and the equations preceding
it) implies that
$$
S_t : X^{2,\mu}\to X^\mu.
$$
Clearly, for any $p\in X^{ 2, \mu}$,
$$
S_t'(p):  X^{2,\mu}\to X^\mu.
$$ Moreover, by elliptic theories, $S_t'(p)$ is an isomorphism from
$X^{2,\mu}$ to $X^\mu$ for any $p\in X^{ 2, \mu}$ satisfying
(\ref{16newt}).

At $t=0$, the zero function $p\equiv 0$ is a solution of
(\ref{15newt}). Since the linearized operator $S_t'(p)$ is an
isomorphism from $X^{2,\mu}$ to $X^\mu$, and since we have established
apriori estimates in Lemma \ref{lem4}, the solvability follows from
the standard method of continuity. In this way, Theorem \ref{thm1} is
established.

\section{The 2-step case revisited}\label{2fib}

We return to the study of case (1\1) and a discrete quotient
$M=\Gamma_0\backslash(\Nil^3\times S^1)$, with the invariant coframe
\eqref{basisKT} that we reproduce for convenience:
\[ e^1=dy,\qquad e^2=dx,\qquad e^3=dt,\qquad e^4=dz-x\kern1pt dy.\]
Consider the fibration
\[ \pi_{yt}\colon M\longrightarrow\bT^2_{yt},\]
where a volume form on the base is $e^{13}$.

As we remarked in Section~\ref{AK}, the fibres may or may not be
Lagrangian, depending on the choice of symplectic form. For example,
whilst $e^{14}+e^{23}$ restricts to zero, $e^{13}+e^{42}$ is
non-degenerate on the fibres of $\pi_{yt}$. We are unable to say much
about the situation of a symplectic fibration without the hypothesis
on $J$ that was used in Proposition~\ref{simp}. Next, we turn out
attention to the Lagrangian case.

Apply Lemma~\ref{filter} to retrieve a coframe $(f^i)$ respecting
the filtration determined by the ordered basis $(e^1,e^3,e^2,e^4)$,
and write
\[ df^4 = l f^{12}+mf^{13},\qquad m\ne0.\]
This corresponds to \eqref{df3df4}, except that $k=0$ in
the present context. One may
easily repeat the calculations in the proof of Theorem~\ref{3stepthm}
with $k$ (that played little role) zero. We obtain the same
generalized Monge--Amp\`ere equation
\[
\Big(\frac{B^2+C^2}{AB^2} + p_{yy} - \frac{mC^2}{A^2\!B}p_t\Big) 
\Big(\frac A{B^2} + p_{tt} - \frac mB p_t\Big) \! -
\Big(\!\!-\!\frac C{B^2} + p_{yt} + \frac{mC}{AB}p_t\Big)^2 
= \frac1{B^2}\eF\!,
\]
that can be solved by the techniques of Section~\ref{GMA}. In
this case the final step reduces to
\[ Al p_y + l Cp_t+A^2a_{2,y}-ABa_{1,t}=0,\]
which can be simply solved by setting
\[a_1 = \frac{lC}{AB}p,\qquad a_2 = -\frac lA p.\]
In conclusion,

\begin{cor}\label{secondpr}
Let $(g,J,\Om)$ be an invariant \aK structure on $M$ for which $\Om$
restricts to zero on the fibres of $\pi_{yt}$ over $\bT=\bT^2_{yt}$.
Let $\sigma=\eF\,\Om^2$ be a volume form with $F\in C^\infty(\bT)$ and
$\int_\bT(\eF-1)=0$. Then the associated Calabi--Yau problem has a
unique solution.
\end{cor}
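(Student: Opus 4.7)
The plan is to imitate the proof of Theorem \ref{3stepthm}, exploiting the fact (noted in \eqref{3st} and the paragraph after) that case (1\1) is the $\la\to0$ limit of the $\Nil^4$ setup. First I would invoke Lemma \ref{filter}: the basis \eqref{basisKT} satisfies $de^1=de^2=de^3=0$ and $de^4=e^{12}$, which is \eqref{def} with $\la=0$. The Lagrangian hypothesis on the fibres of $\pi_{yt}$ is precisely the extra condition $\Om\we e^{13}=0$ that the lemma requires in the degenerate case $\la=0$. The lemma then supplies an orthonormal coframe $(f^i)$ adapted to the filtration of $(e^1,e^3,e^2,e^4)$, with $\Om=f^{14}+f^{23}$, $df^3=0$, and $df^4=lf^{12}+mf^{13}$ for some $m\ne0$.

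Next I would write $dt=Af^1$ and $dz=Cf^1+Bf^2$ (the analogue of \eqref{ACB} with the roles of base coordinates now taken by $(y,t)$), introduce a $1$-form $\alpha=\sum_i a_i f^i$ whose coefficients are functions on $\bT$, and impose that $d\alpha$ be of type $(1,1)$ relative to $J$. This yields two scalar conditions analogous to \eqref{klm}, but with $k=0$ throughout, together with the Calabi--Yau volume equation $(\Om+d\alpha)^2=\eF\Om^2$. Carrying out the substitution
\[a_3=\frac{B}{A}p_y-\frac{m}{A}p,\qquad a_4=p_t+\frac{C}{A}p_y\]
(the analogue of the choice in the proof of Theorem \ref{3stepthm}) reduces the problem to the displayed generalized Monge--Amp\`ere equation on $\bT^2_{yt}$, which fits the framework of \eqref{GMAeq}. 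Theorem \ref{th1} then produces a unique (up to additive constant) solution $p\in C^\infty(\bT)$, the solvability hypothesis \eqref{17newnew} being exactly the stated normalization $\int_\bT(\eF-1)=0$.

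The step that might look delicate is the residual equation for $a_1,a_2$. In the 3-step proof this reduced to $(\ta_2)_t-(\ta_1)_z=q$ with $q=-kmp$, and cohomological exactness on $\bT^2$ was needed to obtain biperiodic primitives. Here the main simplification of the 2-step case appears: because $df^3=0$, the coefficient $k$ in \eqref{df3df4} vanishes, and the obstruction disappears. The residual equation collapses to the algebraic relation $Alp_y+lCp_t+A^2a_{2,y}-ABa_{1,t}=0$, which is solved explicitly and biperiodically by
\[a_1=\frac{lC}{AB}\,p,\qquad a_2=-\frac{l}{A}\,p.\]
In this sense the 2-step case is strictly easier than the 3-step case: the anticipated cohomological obstacle evaporates, and the remaining bookkeeping (verifying that $\Om+d\alpha$ is of type $(1,1)$ and positive-definite on a dense open set, and hence on all of $M$) is identical to that of Theorem~\ref{3stepthm}. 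Uniqueness is immediate from Proposition~\ref{!}.
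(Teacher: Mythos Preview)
Your proposal is correct and follows essentially the same route as the paper: apply Lemma~\ref{filter} with $\la=0$ (using the Lagrangian hypothesis to supply \eqref{Om13}), rerun the computation of Theorem~\ref{3stepthm} with $k=0$, reduce to the generalized Monge--Amp\`ere equation handled by Theorem~\ref{th1}, and solve the residual equation for $a_1,a_2$ explicitly by the displayed formula; uniqueness via Proposition~\ref{!}. The only blemishes are notational: you write $dt=Af^1$, $dz=Cf^1+Bf^2$ while simultaneously declaring the base coordinates to be $(y,t)$, and your substitution for $a_3,a_4$ has the roles of the two base variables swapped relative to the paper's convention (where the replacement is $t\mapsto y$, $z\mapsto t$); neither affects the argument.
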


\subsection{Manifolds belonging to (1\2) and (1\3)}\label{12}
Consider now a $T^2$ fibration
\[\pi\colon M=\Gamma\backslash X\to\mathbb{T}^2,\]
where $X=Nil^3\times \R$ and the generators of $\Gamma$ are given by
either (1\2) or (1\3). In this case, $\Gamma$ is not a lattice of $X$,
but it contains a lattice $\Gamma_0$ of $X$ such that
$\Gamma_0\backslash \Gamma=\Z_2$. Therefore there exists a
covering map $p\colon \Gamma_0\backslash X\to \Gamma\backslash X$
which preserves the $T^2$-bundle structure over $\mathbb{T}^2$. An \aK
structure in this context is called \emph{invariant} if it is
left-invariant on $X$ and invariant by $\Gamma$. In particular every
invariant \aK structure on $\Gamma\backslash X$ induces an
invariant \aK structure on $\Gamma_0\backslash X$. Since
$p$ preserves the $T^2$-bundle, Corollary \ref{secondpr} implies that
the Calabi--Yau problem for $T^2$-invariant volume form can be solved
even in these two cases.

\subsection{Representation theory.}\label{Thetafunc}
The fibrations $\pi_{xy},\pi_{zt}$ of the Kodaira--Thurston manifold
are intimately related to its function theory and unitary
representations of the Heisenberg group $\Nil^3$. We describe this
briefly.

For simplicity, ignore the $\R$ factor and work on the nilmanifold
$M^3=\Z^3\backslash\Nil^3$, where $\Z^3$ is the standard integer
lattice (see \eqref{xyz}). Consider the action of $\Nil^3$ on $f\in
L^2(\R)$ defined by setting
\[  (H_{x,y,z}\cdot f)(u)=\eE^{2\pi ik(z+yu)}f(x+u),\]
where $H_{x,y,z}$ is the matrix \eqref{xyz} and (here) $k$ is an
integer. This makes $L^2(\R)$ into a unitary representation of
$\Nil^3$ that we denote by $V_k$.

Following the discussion on \cite[page~6]{KU}, for any smooth function
$f\in L^2(\R)$, we set
\begin{equation}\label{Ff}
F(x,y,z) = \sum_{n\in\Z} (H_{x,y,z}\cdot f)(n)
= \eE^{2\pi ikz}\sum_{n\in\Z}f(x+n)\eE^{2\pi inky}.
\end{equation}
By its construction, $F=F_f$ is a well-defined function $M^3\to\C$.
Indeed \[F(x+a,y+b,z+c+ay) = F(x,y,z),\qquad a,b,c\in\Z,\] using the
coordinates of \eqref{xyz}. In particular, one may regard $x,y$ as
defined on $\bT^2_{xy}$ and $y$ as defined on $\bT^2_{yt}$, but we are
now considering functions outside the $T^2$ invariant class that we
have previously considered. If we take $f(x)=\eE^{-\pi x^2}$ then
\begin{equation}\label{Ftheta}
F(x,y,z) = e^{2\pi ikz}\vartheta(ky+ix)f(x)
\end{equation}
where $\vartheta$ is a classical theta function.

The mapping $f\mapsto F$ given by \eqref{Ff} realizes $V_k$ as a
summand of $L^2(M^3)$. A more subtle discussion leads to a Peter--Weyl
type decomposition
\begin{equation}\label{L2M}
L^2(M^3)\cong L^2(\bT^2)\oplus\bigoplus_{k\in\Z}|k|V_k,
\end{equation}
in which $V_k$ occurs as an isotypic component of multiplicity $|k|$
(see \cite{AB} and references therein).  In our context, one might
first hope to extend the class of assigned volume forms to functions
on $M^3$, and analyse their behaviour under the relevant differentuial
operators with the aid of \eqref{L2M}. Generalizations of the latter
hold for $L^2$ spaces of sections of holomorphic line bundles over the
Kodaira--Thurston manifold \cite{Eg,KU}.

\section{Almost-K\"ahler structures for $\Sol^3\times\R$}
\label{Sol}

In this final section, we make some observations regarding Calabi--Yau
problems for the manifolds belonging to the families (3\1) and
(3\2).

As in Section \ref{2fib}, we assume that $M$ is a solvmaniolfd
(i.e.\ that $\Gamma$ is a lattice) and then we use the observation of
Subsection \ref{12} to generalize to the case of
infra-solvmanifolds. Let
\[\pi\colon M=\Gamma \backslash \R^4\longrightarrow\bT_{zt}^2\] be the
relevant $T^2$-bundle with $\pi(x,y,z,t)=(z,t)$ and $\Gamma$ a lattice
of $G$. In the notation of Section~\ref{classification}, we begin with
the coframe
\begin{equation}\label{1341}\textstyle
e^1 = dt,\qquad e^2 = dz,\qquad e^3 = \eE^t\,dx,\qquad e^4 =
\eE^{-t}dy.
\end{equation}
Each tangent space to the fibres of $\pi_{zt}$ is represented by the
annihilator $\left<e^1,e^2\right>^\circ$, and $e^{12}$ is a volume
form on the base $\bT_{zt}^2$.

A special feature of the geometry modelled on \eqref{1341} is that the
transversal subspace $\left<e^3,e^4\right>^\circ$ is tangent to a
distribution $D$ that is also integrable. This is an immediate
consequence of the fact that the invariant forms $e^3,e^4$ generate a
differential ideal. Moreover,

\begin{lemma}\label{sol^3}
Let $(g,J,\Om)$ be any invariant \aK structure on $M$. The distribution
$D$ is $J$-holomorphic, and there exists an orthonormal basis $(f^i)$
such that
\begin{equation}\label{Omega4}
\Om=f^{12}+f^{34},
\end{equation}
and
\[ f^1\in \langle e^1\rangle,\qquad f^3\in\langle e^3\rangle,\qquad
f^4\in\langle e^3,e^4\rangle.\]
\end{lemma}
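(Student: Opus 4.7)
The plan is to use the structure equations \eqref{1341} to pin down the form of any invariant closed symplectic form, then to argue that $D$ is $J$-invariant by exploiting the compatibility $\Om(\cdot,J\cdot)=g(\cdot,\cdot)$, and finally to realize the coframe $(f^i)$ by a Gram--Schmidt adapted to the filtration of $T^*M$ induced by $(e^i)$.

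First, from $de^1=de^2=0$, $de^3=e^{13}$, $de^4=-e^{14}$ I would verify directly that $\ker d\cap\Lambda^2\mathfrak{g}^*=\langle e^{12},e^{13},e^{14},e^{34}\rangle$, so any invariant closed $2$-form reads
\[\Om=\alpha e^{12}+\beta e^{13}+\gamma e^{14}+\delta e^{34},\]
and $\Om\we\Om=2\alpha\delta\,e^{1234}$ forces $\alpha\delta\ne0$ by non-degeneracy. Frobenius applied to the differential ideal $\langle e^3,e^4\rangle$ yields integrability of $D=\langle e_1,e_2\rangle$, and $\Om|_D=\alpha e^{12}|_D$ is non-vanishing, making $D$ a symplectic $2$-plane for $\Om$.

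The central step is to prove $JD=D$. Here one combines $\Om(X,Y)=g(JX,Y)$ with $J^2=-\mathrm{id}$ and the positive-definiteness of $g$ to show that $J^*$ preserves the filtration $\langle e^1\rangle\subset\langle e^1,e^3,e^4\rangle\subset T^*M$, whose annihilator filtration on $TM$ has middle term $D$. I expect this to be the main obstacle of the proof: the cross terms $\beta e^{13}+\gamma e^{14}$ in $\Om$ threaten to force $J$ to mix $D$ with its complement, so the argument must exploit both the positive-definiteness of $g$ and the special feature that $\beta e^{13}+\gamma e^{14}=d(\beta e^3-\gamma e^4)$ is $d$-exact and $\Om$ is cohomologous to $\alpha e^{12}+\delta e^{34}$. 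Granted $JD=D$, the $g$- and $\Om$-orthogonal complements of $D$ coincide, as $(JD)^{\perp g}=D^\Om$.

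Once $D$ is known to be $J$-invariant, the coframe $(f^i)$ is built step-by-step. Set $f^1=\lambda_1 e^1$ with $\lambda_1>0$ chosen so that $|f^1|_g=1$, and define $f^2=-J^*f^1$; by duality $(f_1,f_2)$ is an orthonormal $J$-basis of $D$, and no further condition on $f^2$ is needed. On the $J$-invariant orthogonal complement $D^{\perp g}$, pick $f^3\in\langle e^3\rangle$ of unit $g$-length (which is possible because the orthogonal projection of $e^3$ onto $D^{\perp g}$ is a non-zero multiple of $e^3$ itself in view of the $J$-decomposition of the previous step), and set $f^4=-J^*f^3\in\langle e^3,e^4\rangle$ using the $J^*$-invariance of $\langle e^3,e^4\rangle$ dual to $JD=D$. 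The identity $\Om=f^{12}+f^{34}$ is then the standard expression of $\Om$ in an orthonormal $J$-adapted coframe.
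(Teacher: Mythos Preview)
Your plan is well organized and you put your finger on exactly the right spot: the whole lemma stands or falls with the claim $JD=D$ (equivalently, $\langle f^3,f^4\rangle=\langle e^3,e^4\rangle$). The paper's own argument attacks this differently from you---it builds the adapted coframe first and then reads off $J$-invariance of $D$---but the two approaches converge on the same unproved assertion.

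Unfortunately, that assertion is not true in general, so neither your outline nor the paper's proof can be completed as written. Here is a concrete invariant almost-K\"ahler structure on $\Gamma\backslash(\Sol^3\times\R)$ for which $D=\langle e_1,e_2\rangle$ is \emph{not} $J$-invariant. Take $\Om=e^{12}+e^{34}$ (closed, non-degenerate) and, for a parameter $a\ne0$, set
\[
Je_1=e_2+ae_3,\quad Je_2=-(1{+}a^2)e_1-ae_4,\quad Je_3=e_4+ae_1,\quad Je_4=-ae_2-(1{+}a^2)e_3.
\]
One checks $J^2=-\mathrm{id}$, $\Om(J\cdot,J\cdot)=\Om$, and $g(\cdot,\cdot)=\Om(\cdot,J\cdot)$ is positive definite (its only nonzero entries are $g_{11}=g_{33}=1$, $g_{22}=g_{44}=1{+}a^2$, $g_{14}=g_{23}=-a$). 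Then $Je_1=e_2+ae_3\notin D$, so $D$ is not $J$-holomorphic. In the unique adapted coframe with $f^1\in\langle e^1\rangle$ one finds $f^3\propto e^3$ but
\[
f^4=-\tfrac{a}{\sqrt{1+a^2}}\,e^1+\sqrt{1+a^2}\,e^4\notin\langle e^3,e^4\rangle,
\]
so the conclusion $f^4\in\langle e^3,e^4\rangle$ also fails.

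Where the paper's proof breaks is the line asserting $\sigma,\tau\in\langle e^3,e^4\rangle\cap\langle f^3,f^4\rangle$: the forms $\sigma,\tau$ arising from $df^3,df^4$ lie in $\langle e^3,e^4\rangle$, but there is no reason for them to lie in $\langle f^3,f^4\rangle$, and in the example above they do not. What the closure $d(f^{34})=0$ \emph{does} yield (and this is implicit in the paper's contradiction argument) is that $f^3,f^4$ have no $e^2$-component, i.e.\ $\langle f^3,f^4\rangle\subset\langle e^1,e^3,e^4\rangle$; but it does not eliminate the $e^1$-component. Your proposed filtration argument runs into the same wall: the cross terms you flag really can force $J^*$ off the filtration, because positivity of $g$ alone is not enough to kill them.
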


\begin{proof}
Let $(f^i)$ be an orthonormal basis of 1-forms for which
\eqref{Omega4} is valid and $f^1\in\langle e^1\rangle$.
Since any 2-form in the image of $d$ is divisible by $e^1$, we
get $d(f^{12})=-f^1\we df^2=0$. Thus,
\[0 = d(f^{34}) = -f^3\we df^4 +df^3\we f^4
= e^1\we (f^3\we\sigma+f^4\we\tau),\] for some 1-forms
$\sigma,\tau\in\langle e^3,e^4\rangle \cap \langle f^3,f^4\rangle$. If
$\sigma\we\tau=0$, then $\left<f^3,f^4\right>$ contains a closed
1-form that cannot be proportional to $e^1$. Then then
$d(f^{34})\ne0$. Therefore $\sigma,\tau$ are linearly independent and
$\langle f^3,f^4\rangle=\langle e^3,e^4\rangle$. Finally, we may
rotate in this plane so as to select $f^3\in \langle e^3\rangle$.

It now follows that $f^4=Jf^3$ and so $D$ is $J$-invariant.
\end{proof}

If $f^2\in\left<e^1,e^2\right>$ then $\pi_{zt}$ is also
$J$-holomorphic, and we can apply Proposition~\ref{simp} to obtain an
elementary solution:

\begin{cor}\label{dual1}
Let $(g,J,\Om)$ be an invariant \aK structure on $M$ for which
$\pi_{zt}$ is $J$-holomorphic, and let $\sigma=\eF\Om$ be a normalized
volume form with $\eF\in C^\infty(\bT^2_{zt})$. Then the Calabi--Yau
equation $(\Om+d\alpha)^2=\sigma$ has a unique solution.
\end{cor}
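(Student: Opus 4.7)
The plan is to verify that Corollary~\ref{dual1} is a direct application of Proposition~\ref{simp}, once one uses Lemma~\ref{sol^3} to put the almost-K\"ahler data in a normal form adapted to the fibration.

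First, apply Lemma~\ref{sol^3} to obtain an orthonormal coframe $(f^i)$ with $\Om = f^{12}+f^{34}$ and $f^1\in\langle e^1\rangle$, $f^3\in\langle e^3\rangle$, $f^4\in\langle e^3, e^4\rangle$. The span identity $\langle f^3, f^4\rangle = \langle e^3, e^4\rangle$ makes $f^{34}$ a nonzero constant multiple of $e^{34}$, so in particular $\Om$ is non-degenerate on the $\pi_{zt}$-fibres. Next, the hypothesis that $\pi_{zt}$ is $J$-holomorphic means that $J$ preserves the vertical distribution $\langle e_3, e_4\rangle$; dually, $J^\ast$ preserves its annihilator $\langle e^1, e^2\rangle$. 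Since $Jf_1 = f_2$ and $f^1\in\langle e^1\rangle\subset\langle e^1,e^2\rangle$, the invariance forces $f^2\in\langle e^1, e^2\rangle$ as well. Consequently $\langle f^1, f^2\rangle = \langle e^1, e^2\rangle$, and so $f^{12} = \mu\, e^{12}$ for some constant $\mu\in\R\setminus\{0\}$; this is the pullback of a volume form on the base $\bT = \bT^2_{zt}$.

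Now define
\[\tOm = \mu\,\eF e^{12} + f^{34}.\]
The identity $\tOm^2 = 2\mu\eF\, e^{12}\we f^{34} = \eF\,\Om^2 = \sigma$ is immediate, and $\tOm$ remains a positive-definite $(1,1)$-form relative to $J$ because $\mu e^{12}=f^{12}$ and $f^{34}$ are both $(1,1)$ and $\eF>0$. The difference $\tOm - \Om = \mu(\eF-1)\,e^{12}$ is pulled back via $\pi_{zt}$ from a closed 2-form on the two-dimensional base $\bT$. Because $\bT$ is a 2-torus, such a 2-form is exact if and only if its integral vanishes. Applying Fubini to the normalization $\int_M\sigma = \int_M\Om^2$, using that $f^{34}$ is a constant multiple of the fibrewise area form $e^{34}$, yields $\int_\bT \mu(\eF-1)\,e^{12} = 0$. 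Hence there exists $\beta\in\Omega^1(\bT)$ with $d\beta = \mu(\eF-1)\,e^{12}$, and $\alpha := \pi_{zt}^\ast\beta$ solves $\tOm = \Om + d\alpha$. Uniqueness follows from Proposition~\ref{!}.

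The only delicate step is the second one: turning the $J$-holomorphicity hypothesis into the concrete statement $f^{12} = \mu\,e^{12}$ that makes the simple summand of $\Om$ basic with respect to $\pi_{zt}$. Once this reduction is in hand, the remainder is an explicit construction that mirrors the proof of Proposition~\ref{simp} verbatim; no PDE is solved and no use is made of the Monge--Amp\`ere theory of Section~\ref{GMA}.
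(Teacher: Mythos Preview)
Your proof is correct and follows exactly the route the paper intends: the sentence preceding the corollary says that once $f^2\in\langle e^1,e^2\rangle$ one applies Proposition~\ref{simp}, and you have spelled out both why the $J$-holomorphicity hypothesis forces $f^2\in\langle e^1,e^2\rangle$ (the paper only states the converse) and the explicit construction of $\tOm$. One small notational slip: you write ``$Jf_1=f_2$'' while working with the covector $f^1$; the clean statement is that $J^*$ preserves $\langle e^1,e^2\rangle$ and $J^*f^1=-f^2$, hence $f^2\in\langle e^1,e^2\rangle$.
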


One can apply the same argument for a volume form that is
constant along the leaves of the associated foliation:

\begin{cor}\label{dual2}
Let $(g,J,\Om)$ be an invariant \aK structure on $M$ and let
$\sigma=\eF\Om$ be a normalized volume form such that $dF\we
e^{34}=0$. Then the Calabi--Yau equation $(\Om+d\alpha)^2=\sigma$ has
a unique solution.
\end{cor}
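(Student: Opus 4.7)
The plan is to write down $\tOm$ explicitly, in direct analogy with Proposition~\ref{simp} but with the roles of base and fibre directions swapped. First, I apply Lemma~\ref{sol^3} to obtain an adapted orthonormal coframe $(f^i)$ in which $\Om=f^{12}+f^{34}$, with both summands closed, the splitting $\langle f^1,f^2\rangle\oplus\langle f^3,f^4\rangle$ preserved by $J$, and $f^{34}$ equal to a nonzero constant multiple of $e^{34}$ (since $f^3\in\langle e^3\rangle$ and $f^4\in\langle e^3,e^4\rangle$). The candidate is
\[\tOm=f^{12}+\eF f^{34}.\]
The identity $\tOm^2=\eF\Om^2$ is immediate; closedness reduces to $d(\eF f^{34})=\eF\,dF\we f^{34}=0$, which follows from the hypothesis $dF\we e^{34}=0$ because $f^{34}\propto e^{34}$; and $\tOm$ is positive $(1,1)$ because both $f^{12}$ and $f^{34}$ already were.

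The main obstacle is to show that $\tOm-\Om=(\eF-1)f^{34}$ is \emph{exact}, not merely closed. My plan is to invoke Hattori's theorem: $\Sol^3\times\R$ is completely solvable (all eigenvalues of the adjoint representation are real), so on the solvmanifold $M$ the de Rham cohomology is computed by invariant forms on the Lie algebra. From the structure equations \eqref{str}(3) the closed invariant $2$-forms are spanned by $e^{12},e^{13},e^{14},e^{34}$; since $e^{13}=de^3$ and $e^{14}=-de^4$ are exact, this gives $H^2(M,\R)=\langle[e^{12}],[e^{34}]\rangle$. The Poincar\'e pairing on these two classes is off-diagonal, because $e^{12}\we e^{12}=e^{34}\we e^{34}=0$ while $e^{12}\we e^{34}=e^{1234}$ is a volume form.

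It therefore suffices to check that $[(\eF-1)f^{34}]$ has zero Poincar\'e pairing with each of $[e^{12}]$ and $[e^{34}]$. Against $[e^{34}]$ the pairing is trivially zero since $f^{34}\propto e^{34}$ and $e^{34}\we e^{34}=0$. Against $[e^{12}]$ it is proportional to $\int_M(\eF-1)\,e^{1234}$; since $\Om^2$ and $e^{1234}$ are proportional invariant volume forms, this is in turn proportional to the normalization integral $\int_M(\eF-1)\Om^2=0$ from \eqref{CY1}. Hence $(\eF-1)f^{34}=d\alpha$ for some $1$-form $\alpha$, the form $\tOm=\Om+d\alpha$ solves the Calabi--Yau problem, and uniqueness follows from Proposition~\ref{!}.
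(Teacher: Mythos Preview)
Your proof is correct and follows essentially the same approach as the paper's: define $\tOm=f^{12}+\eF f^{34}$ using the coframe from Lemma~\ref{sol^3}, verify compatibility with $J$, and argue exactness of $(\eF-1)f^{34}$ via the Poincar\'e pairing with the two generators of $H^2(M,\R)=\langle[e^{12}],[e^{34}]\rangle$. Your version is more thorough in that you explicitly invoke Hattori's theorem to justify this description of $H^2$ and spell out why closedness of $\eF f^{34}$ follows from $dF\we e^{34}=0$, whereas the paper simply asserts these facts.
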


\begin{proof}
Let $(f^i)$ a coframe of $1$-forms as in Lemma \ref{sol^3}. A solution
of the Calabi--Yau equation is simply given by
$\tilde{\Om}=f^{12}+{\rm e}^F f^{34}$. Indeed, $\tilde \Om$ is
compatible with respect to $J$ since $J$ preserves $f^{34}$, and
$\tilde\Om-\Om=({\rm e}^F-1)f^{34}$ is exact since the
normalization \[ \int_M(\eF-1)f^{1234}=0\] implies that it has zero
cup product with $H^2(M,\R)=\langle [f^{12}],[f^{34}]\rangle =\langle
[e^{12}],[e^{34}]\rangle$.
\end{proof}

It is interesting to compare the dual situations highlighted by the
last two results. In Corollary~\ref{dual1}, the choice of $\Omega$
(or, alternatively, $J$) is restricted but $F$ is a free function on
the base. On the other hand, Corollary~\ref{dual2} refers to an
arbitrary invariant \aK structure, though $\Omega$ and $J$ are in
practice already constrained by the geometry. Because of the anture of
the foliation one is dealing with, the applicable class of functions
is restricted, as our final example illustrates.

\begin{ex}
This is a sequel to Example~\ref{Ex1} in which $M=M^3\times S^1$, with
$z$ a coordinate on $S^1$. The leaves of $D$ have the form $C\times
S^1$, where $C$ is an integral curve of the form
\[ (x,y,t)=(x_0/\sigma^k,\>y_0\sigma^k,\>t),\qquad k\delta\le t< (k+1)\delta.\]
For generic $(x_0,y_0)$ this curve is dense in $M^3$ and so any smooth
function constant on $C$ is constant on $M^3$. Therefore the
hypothesis $dF\we e^{34}=0$ implies that $F\in C^\infty(S^1)$, and the
normalization is $\int_{S^1}F(z)\,dz=0$. 
\end{ex}

\subsection{Conclusion} Earlier in the paper, we successfully solved
the Calabi--Yau problem for volume forms invariant by a 2-torus
action, relative to various fixed almost-K\"ahler structures and
Lagrangian fibrations to a 2-torus base $\bT^2$. The $T^2$-invariance
is a natural hypothesis, in view of the analogy with toric geometry in
which moment mappings play the role of the fibrations.

On the other hand, the examples in the present section show that, in a
more general context, it is futile to restrict the class of volume
forms. The manifold $\Gamma\backslash\Sol^3$ exhibits geometry of a
very different type, involving an action by the $2\times2$ diagonal
matrix $\varphi(t)$ (see Section~\ref{classification}) also seen on
the unit tangent bundle
$\mathit{PSL}(2,\Z)\backslash\mathit{PSL}(2,\R)$ of the modular
surface \cite{Ghys}.

The theory described in Subsection~\ref{Thetafunc}, or rather its
generalizations to four dimensions tailored to symplectic geometry
\cite{Eg,KU}, is likely to be relevant in solving the problem in a
general non-Lagrangian setting.

In another direction, one can vary the ambient \aK structure. As a
first step, the result of Corollary~\ref{cor45} can be extended to
`separable' symplectic forms such as
$\Om(x,y)=f(x)e^{14}+g(y)e^{23}$. Such calculations lead one to
postulate further Calabi--Yau esistence results.

\bigbreak

\bigbreak\vfil

\footnotesize\parindent0pt\parskip8pt

Anna Fino\\
Dipartimento di Matematica, Universit\`a di Torino, Via
Carlo Alberto 10, 10123 Torino, Italia\\
\texttt{annamaria.fino@unito.it}

YanYan Li\\
Department of Mathematics, Rutgers University,
110 Frelinghuysen Road, Piscataway, NJ 08854, USA\\
\texttt{yyli@math.rutgers.edu}

Simon Salamon\\
Dipartimento di Matematica, Politecnico di Torino,
Corso Duca degli Abruzzi 24, 10129 Torino, Italia\\
\texttt{simon.salamon@polito.it}

Luigi Vezzoni\\
Dipartimento di Matematica, Universit\`a di Torino,
Via Carlo Alberto 10, 10123 Torino, Italia\\
\texttt{luigi.vezzoni@unito.it}

\enddocument